\let\c@table\c@figure
\let\ftype@table\ftype@figure
\newlist{hypothenum}{enumerate}{3}
\setlist[hypothenum,1]{label=(\roman*)}
\setlist[tablenotes]{label=\tnote{\alph*},ref=\alph*,itemsep=\z@,topsep=\z@skip,partopsep=\z@skip,parsep=\z@,itemindent=\z@,labelindent=\tabcolsep,labelsep=.2em,leftmargin=*,align=left,before={\footnotesize}}
\theoremstyle{plain}
\newtheorem{theorem}{Theorem}[section]
\newtheorem{proposition}[theorem]{Proposition}
\newtheorem{lemma}[theorem]{Lemma}
\newtheorem{claim}[theorem]{Claim}
\newtheorem{problem}[theorem]{Problem}
\theoremstyle{definition}
\theoremstyle{remark}
\newtheorem{remark}[theorem]{Remark}
\numberwithin{equation}{section}
\newcommand{\eps}{\varepsilon}
\newcommand{\setsuch}[2]{\left\{ #1 \; \middle| \; #2 \right\}}
\newcommand{\restr}[2]{{\left. #1 \right|}_{#2}}
\newcommand{\ZZ}{\mathbb{Z}}
\newcommand{\RR}{\mathbb{R}}
\newcommand{\CC}{\mathbb{C}}
\newcommand{\HH}{\mathbb{H}}
\newcommand{\lie}{\mathfrak}
\newcommand{\twice}{{\scriptstyle 2 \cdot}}
\DeclareMathOperator{\Id}{Id}
\DeclareMathOperator{\SO}{SO}
\newcommand{\fundef}[5]{
\entrymodifiers={+!!<0pt,\fontdimen22\textfont2>}
\xymatrix@R=3pt{\llap{$#1$\;\;} {#2} \ar@{->}[r] & {#3} \\ {#4} \ar@{|->}[r] & {#5}}
} 
\newenvironment{smallermatrix}[1][c]
{\null\,\vcenter\bgroup
  \Let@\restore@math@cr\default@tag
  \baselineskip0pt \lineskip0.4pt \lineskiplimit0pt
  \ialign\bgroup\if#1l\else\hfil\fi$\m@th\scriptstyle##$\if#1r\else\hfil\fi&&\thickspace\hfil
  $\m@th\scriptstyle##$\hfil\crcr
}{%
  \crcr\egroup\egroup\,%
}
\newcommand\poption[1]{\def\temp{#1}\ifx\temp\empty {} \else \text{\fbox{$\temp$}} \fi}
\newcommand{\threeindwithspace}[3]{
  \begin{smallermatrix}[c]
  \mathstrut\IfValueT{#1}{#1} \\
  \IfValueT{#2}{\poption{#2}}{} \\
  \mathstrut\IfValueT{#3}{#3}
  \end{smallermatrix}%
}
\newcommand{\threeind}[3]{
  \def\haut{#1}
  \def\bas{#3}
  \def\matrix{\threeindwithspace{#1}{#2}{#3}}
  \!
  \ifx\haut\empty
    \ifx\bas\empty \smash{\matrix} \else \smash[t]{\matrix} \fi
  \else
    \ifx\bas\empty \smash[b]{\matrix} \else \matrix \fi
  \fi
  \!%
}
  \newcounter{topic@label}
  \renewcommand*{\thetopic@label}{topic@\the\value{topic@label}}
  \global\let\topic@previous\relax
  \global\let\lasttopic\relax
  \newcommand*{\topic}[1]{%
    \begingroup
      \def\topic@put{\topicformat{#1}}%
      \edef\topic@previouslabel{\thetopic@label}%
      \stepcounter{topic@label}%
      \zref@labelbyprops{\thetopic@label}{abspage}%
      \def\topic@current{#1}%
      \ifx\topic@current\topic@previous
        \zifrefundefined{\topic@previouslabel}{%
          \topic@put
        }{%
          \zifrefundefined{\thetopic@label}{%
            \topic@put
          }{%
            \ifnum\zref@extractdefault{\topic@previouslabel}{abspage}{0}=%
                  \zref@extractdefault{\thetopic@label}{abspage}\relax
            \else
              {\topic@put {\tiny { (cont.)}}}
            \fi
          }%
        }%
      \else
        \topic@put
      \fi
      \global\let\topic@previous\topic@current
    \endgroup
    \gdef\lasttopic{\topic{#1}}%
  }
  \newcommand*{\topicformat}[1]{#1}
\newcommand{\ie}{i.e.\ }
\newcommand\diagramfontsize\footnotesize
\begin{document}

\title{Action of $w_0$ on $V^L$: the special case of $\lie{so}(1,n)$}
\author{Ilia Smilga\thanks{The author was supported by the European Research Council (ERC) under the European Union Horizon 2020 research and innovation programme (ERC starting grant DiGGeS, grant agreement No.\ 715982) and by the Simons Investigator Award 409735 from the Simons Foundation.}}

\maketitle

\begin{abstract}
In this note, we present an algorithm that allows to answer any individual instance of the following question. Let $G_\RR$ be a semisimple real Lie group, and $V$ an irreducible representation of~$G_\RR$. How does the longest element~$w_0$ of the restricted Weyl group~$W$ act on the subspace~$V^L$ of~$V$ formed by vectors that are invariant by~$L$, the centralizer of a maximal split torus of~$G_\RR$? This algorithm comprises two parts. First we describe a complete answer to this question in the particular case where $G_\RR = \SO(1,n)$ for any $n \geq 2$. Then, for an arbitrary $G_\RR$, we show that it suffices to do the computation in a well-chosen subgroup $S_\RR \subset G_\RR$ which is (up to isogeny) the product of several groups that are either compact, abelian or isomorphic to $\SO(1,n)$ for some $n \geq 2$.
\end{abstract}

\section{Introduction}

\subsection{Basic notations}
\label{sec:notations}

We start by setting up the objects involved in the statement or our problem.

\begin{enumerate}
\item Let $\lie{g}$ be a semisimple complex Lie algebra, $\lie{g}_\RR$ some real form of~$\lie{g}$ (so that $\lie{g} = (\lie{g}_\RR)^\CC$).
\item We choose in~$\lie{g}_\RR$ a Cartan subspace~$\lie{a}_\RR$ (an abelian subalgebra of $\lie{g}_\RR$ whose elements are diagonalizable over~$\RR$ and which is maximal for these properties); we set $\lie{a} := (\lie{a}_\RR)^\CC$.
\item We choose in~$\lie{g}$ a Cartan subalgebra~$\lie{h}$ (an abelian subalgebra of $\lie{g}$ whose elements are diagonalizable and which is maximal for these properties) that contains~$\lie{a}$.
\item We denote by~$\lie{l}(\lie{g}_\RR)$, or simply $\lie{l}$ when clear from context, the centralizer of~$\lie{a}$ in~$\lie{g}$. 
\item Let $\Delta$~be the set of roots of~$\lie{g}$ in~$\lie{h}^*$. We shall identify $\lie{h}^*$ with~$\lie{h}$ via the Killing form. We call $\lie{h}_{(\RR)}$ the $\RR$-linear span of~$\Delta$; it is given by the formula $\lie{h}_{(\RR)} = \lie{a}_\RR \oplus i \lie{t}_\RR$, where $\lie{t}_\RR$ is the orthogonal complement of $\lie{a}_\RR$ in~$\lie{h} \cap \lie{g}_\RR$.
\item \label{itm:lex_ord_choice} We choose on~$\lie{h}_{(\RR)}$ a lexicographical ordering that ``puts $\lie{a}_\RR$ first'', \ie such that every vector whose orthogonal projection onto~$\lie{a}_\RR$ is positive is itself positive. We call $\Delta^+$ the set of roots in~$\Delta$ that are positive with respect to this ordering, and we let $\Pi = \{\alpha_1, \ldots, \alpha_r\}$ be the set of simple roots in~$\Delta^+$. Let~$\varpi_1, \ldots, \varpi_r$ be the corresponding fundamental weights.
  \item We call $P$ (resp. $Q$) the weight lattice (resp. root lattice), \ie the abelian subgroup of~$\lie{h}^*$ generated by $\varpi_1, \ldots, \varpi_r$ (resp. by~$\Delta$). Elements of~$P$ are called \emph{integral weights}.
\item We introduce the dominant Weyl chamber:
\[\lie{h}^+ := \setsuch{X \in \lie{h}_{(\RR)}}{\forall \alpha \in \Pi,\quad \alpha(X) \geq 0}.\]
\item In the sequel, all representations are supposed to be finite-dimensional and complex. Recall (\cite[Thm.~5.5]{Kna96first} or \cite[Thms. 9.4 and~9.5]{Hall15}) that to every irreducible representation of~$\lie{g}$, we may associate, in a bijective way, a vector $\lambda \in P \cap \lie{h}^+$ called its \emph{highest weight}. We denote by $\rho_\lambda(\lie{g})$ the irreducible representation of~$\lie{g}$ with highest weight~$\lambda$, and by $V_\lambda(\lie{g})$ the space on which it acts. When clear from context, we will shorten $V_\lambda(\lie{g})$ to~$V_\lambda$.
\item Given a representation $V$ of~$\lie{g}$, we denote by $V^\lie{l} := \setsuch{v \in V}{\forall l \in \lie{l},\;\; l \cdot v = 0}$ the $\lie{l}$-invariant subspace of~$V$. 
\item Choose any connected Lie group $G_\RR$ with Lie algebra $\lie{g}_\RR$. We introduce the \emph{restricted Weyl group} $W := N_{G_\RR}(\lie{a}_\RR)/Z_{G_\RR}(\lie{a}_\RR)$ of~$G_\RR$. Then the action of $W$ on $\lie{a}_\RR$ has as fundamental domain the dominant restricted Weyl chamber $\lie{a}^+ := \lie{h}^+ \cap \lie{a}_\RR$. We define the \emph{longest element} of the restricted Weyl group as the unique element $w_0 \in W$ such that $w_0(\lie{a}^+) = -\lie{a}^+$.
\end{enumerate}

We also introduce some notational conventions that we will follow when talking about specific Lie algebras.

\begin{enumerate}[resume]
  \item We denote by $\lie{sp}_{\twice n} (\CC)$, $\lie{sp}_{\twice n} (\RR)$ and $\lie{sp}_\twice (p,q)$ some Lie algebras that have rank $n$ (or~$p+q$) and a standard representation of dimension $2n$ (or~$2p+2q$). Some authors, such as Bourbaki~\cite{BouGAL456}, denote them respectively by $\lie{sp}_{2n} (\CC)$, $\lie{sp}_{2n} (\RR)$ and $\lie{sp}(2p,2q)$; while other authors, such as Knapp~\cite{Kna96first}, denote them respectively by $\lie{sp}_n (\CC)$, $\lie{sp}_n (\RR)$ and $\lie{sp}(p,q)$.
  \item When $\lie{g} = \lie{g}_\RR^\CC$ is simple, we use the basis $(\eps_1, \ldots, \eps_n)$ introduced in the appendix to \cite{BouGAL456} of a vector space containing $\lie{h}^*_{(\RR)}$. Throughout the paper, we use the Bourbaki conventions \cite{BouGAL456} for the numbering of simple roots and their expressions in the coordinates $\eps_i$.

For the non-simple group $\lie{g} = \lie{so}_4(\CC)$, we furthermore introduce, for consistency with the generic $\lie{so}_{2n}(\CC)$ case (see e.g.\ \cite[\S~II.1, Exm.~4]{Kna96first}), the basis $(\eps_1, \eps_2)$ of the dual Cartan subalgebra $\lie{h}^*_{(\RR)}$ in which the simple roots are $\alpha_1 := \eps_1 + \eps_2$ and $\alpha_2 := \eps_1 - \eps_2$.
  \item Given an integral weight $\lambda \in P$, we always denote $\lambda_1, \ldots, \lambda_n$ its coordinates in the basis we just introduced: 
\begin{equation}
\label{eq:coordinate_definition}
\lambda =: \sum_{i=1}^n \lambda_i \eps_i.
\end{equation}
\end{enumerate}

\subsection{Statement of problem}
\label{sec:statement}

This work is part of a larger effort to characterize, for a given semisimple real Lie algebra~$\lie{g}_\RR$, the representations $V$ of~$\lie{g}_\RR$ for which the action of~$w_0$ on $V^\lie{l}$ is nontrivial. This action is always well-defined, and depends only on the algebra $\lie{g}_\RR$, not on the group $G_\RR$: see \cite[\S~2.1]{Smi20}. This problem has a geometric motivation, related to the study of groups of affine transformations acting properly: see the introduction to \cite{Smi20barx}\footnote{I refer to the arXiv version of that paper, as it contains some extra details to which I will later specifically point. In the published version \cite{Smi20b}, some of the cited propositions are either missing altogether, or present but numbered differently.}
 for more details. It naturally splits into two subproblems (see \cite{Smi20} for a more extended discussion):
\begin{problem}
\label{nontrivial_Vl}
Given a semisimple Lie algebra~$\lie{g}$ and a dominant integral weight~$\lambda$, give a simple necessary and sufficient condition for having $V_\lambda^\lie{l} \neq 0$.
\end{problem}
\begin{problem}
\label{nontrivial_w0_action_simple}
Given a simple Lie algebra~$\lie{g}$ and a dominant integral weight~$\lambda$, assuming that $V_\lambda^\lie{l} \neq 0$, give:
\begin{hypothenum}
\item a simple necessary and sufficient condition for having $\restr{w_0}{V_\lambda^\lie{l}} = \pm \Id$;
\item a criterion to determine the actual sign.
\end{hypothenum}
\end{problem}

In~\cite{Smi20barx}, we have already completely solved Problem~\ref{nontrivial_Vl}. In~\cite{LFlSm}, we have solved Problem~\ref{nontrivial_w0_action_simple} in the case where $\lie{g}$ is split. In this note, we present an algorithm that allows to solve Problem~\ref{nontrivial_w0_action_simple} on a case-by-case basis, i.e.\ to compute $\restr{w_0}{V_\lambda^\lie{l}}$ for any \emph{single} value of $\lie{g}_\RR$ and $\lambda$. This algorithm was announced in \cite{Smi22}, in which it was used to compute some numerical data and to formulate a partial conjectural answer to Problem~\ref{nontrivial_w0_action_simple}.

\subsection{Summary of results}
\label{sec:summary}

This paper contains two theorems:
\begin{itemize}
\item Theorem~\ref{algo_for_so_n_1} completely solves Problem~\ref{nontrivial_w0_action_simple} for a key subset of the algebras $\lie{g}_\RR$, namely for $\lie{g}_\RR = \lie{so}(1, n)$ for any $n \geq 2$.
\item Theorem~\ref{reduction_to_so_n_1} then essentially explains how the computation of $\restr{w_0}{V_\lambda^\lie{l}}$ in an arbitrary Lie algebra $\lie{g}_\RR$ can be reduced to the case of $\lie{so}(1, n)$.
\end{itemize}
Taken together, these two theorems thus provide the desired algorithm.

Moreover, I have actually implemented this algorithm in the LiE software package \cite{LiE}. The code, and its documentation, can be found in the ancillary files.

\section{Solution in the case of $\lie{so}(1,n)$}

\begin{theorem}
\label{algo_for_so_n_1}
Assume that $\lie{g}_\RR = \lie{so}(1,n)$, for some $n \geq 2$. Let $\lambda = \sum_{i=1}^r \lambda_i \eps_i \in \lie{h}^*$ be a dominant weight of $\lie{g}$. Let \eqref{eq:star} denote the condition
\begin{equation}
\label{eq:star} \tag{*}
\begin{cases}
\text{if } n \geq 3:\quad \lambda = \lambda_1 \eps_1 + \lambda_2 \eps_2 \text{ for some } \lambda_1, \lambda_2 \in \ZZ \text{ with } \lambda_1 + \lambda_2 \equiv 0 \pmod{2}; \\
\text{if } n = 2:\quad \lambda = \lambda_1 \eps_1 \text{ for some } \lambda_1 \in \ZZ.
\end{cases}
\end{equation}
%
%
%
Then:
\begin{hypothenum}
\item \label{itm:V_L_nontrivial} \cite{Smi20barx} We have $V^{\lie{l}(\lie{g}_\RR)}_\lambda \neq 0$ if and only if $\lambda$ satisfies \eqref{eq:star}.
\item \label{itm:V_L_dimension} If $\lambda$ satisfies \eqref{eq:star}, then in fact $\dim V^{\lie{l}(\lie{g}_\RR)}_\lambda = 1$.
\item \label{itm:action_of_w0} If $\lambda$ satisfies \eqref{eq:star}, then
\begin{equation}
\label{eq:action_of_w0}
\restr{w_0}{V^{\lie{l}(\lie{g}_\RR)}_\lambda} = (-1)^{\lambda_1} \Id.
\end{equation}
\end{hypothenum}
\end{theorem}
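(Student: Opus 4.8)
The plan is to reduce the statement to invariant theory for a block subgroup and then to the determination of a single sign. Realize $\lie{g}=\lie{so}_{n+1}(\CC)$ on $\CC^{n+1}=\langle e_0,e_1\rangle\oplus\langle e_2,\dots,e_n\rangle$, where the first summand is the complexified hyperbolic plane carrying the split torus and the second carries $\lie{m}$. With respect to this splitting $\lie{l}=Z_{\lie g}(\lie{a})$ is the block subalgebra $\lie{so}_{n-1}(\CC)\oplus\lie{so}_2(\CC)$, with $\lie{so}_2(\CC)=\lie{a}$. Since having $\lie{a}$-weight zero is the same as being fixed by the connected group $\SO_2(\CC)$, I first record the identification $V_\lambda^{\lie{l}}=V_\lambda^{\SO_{n-1}(\CC)\times\SO_2(\CC)}$. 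The restricted element $w_0$, which sends $\eps_1\mapsto-\eps_1$, is represented by $\tilde w_0=\operatorname{diag}(-1,-1,1,\dots,1)\in\SO(n)\subset K$ (flipping $e_1$ and $e_2$); since $\restr{w_0}{V_\lambda^{\lie{l}}}$ is well-defined, I may compute it with this particular representative.

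For (ii) I would get the bound $\dim V_\lambda^{\lie{l}}\le 1$ from sphericity. The group $\SO_{n-1}(\CC)\times\SO_2(\CC)$ is the identity component of the symmetric subgroup $S(\Orth_{n-1}\times\Orth_2)$, so the Grassmannian $\SO_{n+1}/S(\Orth_{n-1}\times\Orth_2)$ is a (spherical) symmetric space; as the cover $\SO_{n+1}/(\SO_{n-1}\times\SO_2)\to\SO_{n+1}/S(\Orth_{n-1}\times\Orth_2)$ is finite, a dense Borel orbit downstairs lifts to one of equal dimension upstairs, whence the larger space is spherical too. Thus $\dim V_\lambda^{\SO_{n-1}\times\SO_2}\le 1$ for every $\lambda$, and together with part (i) this yields $\dim V_\lambda^{\lie{l}}=1$ whenever $\lambda$ satisfies \eqref{eq:star}.

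For (iii) I would model $V_\lambda$ as the two-row harmonic polynomials: functions $P(x,y)$ of two vector variables $x,y\in\CC^{n+1}$, of bidegree $(\lambda_1,\lambda_2)$, annihilated by the three contractions $\Delta_{xx},\Delta_{yy},\Delta_{xy}$ and by the raising operator $E_{xy}=\sum_i x_i\partial_{y_i}$ ($\Orth_{n+1}$--$\GL_2$ Howe duality). Splitting $x=(x',x'')$, $y=(y',y'')$, the first fundamental theorem shows that the $\SO_{n-1}(\CC)\times\SO_2(\CC)$-invariants are generated by the block dot products $x'\!\cdot\!x',x'\!\cdot\!y',y'\!\cdot\!y',x''\!\cdot\!x'',x''\!\cdot\!y'',y''\!\cdot\!y''$ and the single $\SO_2$-skew invariant $D=\det(x',y')$, subject only to $D^2=(x'\!\cdot\!x')(y'\!\cdot\!y')-(x'\!\cdot\!y')^2$. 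Writing $\tilde w_0=\sigma'\sigma''$ with $\sigma'\in\Orth_2$, $\sigma''\in\Orth_{n-1}$ both of determinant $-1$: for $n\ge 4$ one has $2<n-1$, so $\sigma''$ fixes all of these invariants (with only two vectors there is no $\SO_{n-1}$-but-not-$\Orth_{n-1}$ invariant), while $\sigma'$ fixes every generator except $D$, which it negates. Hence on the line $V_\lambda^{\lie{l}}$ the operator $\tilde w_0$ acts by the $D$-parity $(-1)^{\deg_D}$.

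It remains to identify this parity. The $\tilde w_0$-fixed part of $V_\lambda^{\lie{l}}$ is exactly its $D$-even part, which equals $V_\lambda^{S(\Orth_{n-1}\times\Orth_2)}$; this is nonzero iff $V_\lambda$ is spherical for the unoriented Grassmannian, i.e. (by the standard decomposition $\CC[\SO_{n+1}/S(\Orth_{n-1}\times\Orth_2)]=\bigoplus_{\lambda_1\ge\lambda_2\ge 0\text{ even}}V_\lambda$) iff $\lambda_1$ and $\lambda_2$ are both even. By part (i) the remaining weights in \eqref{eq:star} have $\lambda_1,\lambda_2$ both odd, and for these the generator of the line $V_\lambda^{\lie{l}}$ is forced to be $D$-odd; in both cases the eigenvalue is $(-1)^{\lambda_1}$. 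The low-rank cases $n=2,3$ (where $\lie{g}=\lie{sl}_2(\CC)$, resp.\ $\lie{sl}_2(\CC)\oplus\lie{sl}_2(\CC)$ and $\lie{l}=\lie{h}$) I would dispatch directly: $V_\lambda^{\lie{l}}$ is the zero-weight line of a tensor product of $\SL_2$-representations, on which the product of longest Weyl elements acts by $(-1)^{\lambda_1}$. The crux---and the genuine point where (i), (ii), (iii) intertwine---is deciding which $D$-parity occurs; equivalently, one must know the spherical weights of the Grassmannian, or show that the harmonic projection of $D^{\lambda_2}(x'\!\cdot\!x')^{(\lambda_1-\lambda_2)/2}$ is nonzero, which is the Cartan-product route to the same sign.
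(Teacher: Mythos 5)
Your proposal is correct, but it takes a genuinely different route from the paper's. For (ii) (in the main case $n\ge 4$) the paper argues combinatorially: it shows that the conditions of \cite[Cor.~5.31]{Smi20barx} on doubled Young tableaux (Littelmann's path model) force a unique tableau; you instead get $\dim V_\lambda^{\lie l}\le 1$ from sphericity of the complexified symmetric pair $\bigl(\SO_{n+1}(\CC),\, S(\Orth_2\times\Orth_{n-1})(\CC)\bigr)$, descended to the identity component $\SO_2(\CC)\times\SO_{n-1}(\CC)$ --- a softer, more classical argument, and your descent-along-a-finite-cover step is sound. For (iii) the paper exploits that $\lambda\mapsto\sigma(\lambda)$ is a monoid morphism (Cartan product) on the set of weights satisfying \eqref{eq:star}, and verifies \eqref{eq:action_of_w0} by hand on the monoid's generators ($\eps_1+\eps_2$, the adjoint representation, and $2\eps_1$); you instead realize $V_\lambda^{\lie l}$ inside the $\SO_2\times\SO_{n-1}$-invariant ring of two-vector harmonics, observe that your representative $\tilde w_0$ acts by the parity of the determinant invariant $D$, and identify that parity with invariance under the disconnected group $S(\Orth_2\times\Orth_{n-1})$, i.e.\ with sphericality for the \emph{unoriented} Grassmannian, which classically holds iff $\lambda_1,\lambda_2$ are both even. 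Your two external inputs --- the first fundamental theorems for $\Orth_k$ and $\SO_k$, and the decomposition $\CC[\SO_{n+1}/S(\Orth_2\times\Orth_{n-1})]=\bigoplus_{\lambda_1\ge\lambda_2\ge 0,\ \text{both even}}V_\lambda$ --- are standard and non-circular, so the logic closes: on the both-odd weights allowed by \eqref{eq:star} and part (i) the sign is forced to be $-1$, giving $(-1)^{\lambda_1}$ in all cases. Two remarks. First, the Howe/FFT apparatus is more than you need: once multiplicity one is known, $\restr{w_0}{V_\lambda^{\lie l}}=+\Id$ iff $V_\lambda^{S(\Orth_2\times\Orth_{n-1})}\neq 0$, simply because $\tilde w_0$ together with $\SO_2\times\SO_{n-1}$ generates that disconnected group; the $D$-parity is a concrete but optional realization of this dichotomy. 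Second, what each approach buys: the paper's stays within the toolkit of \cite{Smi20barx} and its Cartan-product trick is the same one that powers the general reduction (Theorem~\ref{reduction_to_so_n_1}), while yours is independent of that machinery, rests on classical symmetric-space harmonic analysis, and as a by-product would also reprove part (i) for $n\ge 4$ (spherical weights of the oriented Grassmannian). Your $n=2,3$ cases are only sketched, but the $\lie{sl}_2$ facts you invoke ($w_0$ acts on the zero-weight line of $\mathrm{Sym}^{2m}(\CC^2)$ by $(-1)^m$) are elementary and your sign bookkeeping is right.
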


\begin{proof}~
\begin{hypothenum}
\item follows from the Main Theorem in \cite{Smi20barx}. More precisely:
\begin{itemize}
\item For $n = 2$ or $n \geq 4$, it follows from point (i) of that theorem, and can be read off the $B_r$ or $D_r$ row of \cite[Table~1]{Smi20barx}. Note that in the $B_r$ case, the inequality $\lambda_{2r-2p+1} > 0$ is (by convention) tautologically false for $r \geq 1$. For $r = 1$ (corresponding to $n = 2$), on the contrary, it is almost tautologically true, which is why the parity condition $\lambda_1 + \lambda_2 \equiv 0 \pmod{2}$ does not appear then.
\item For $n = 3$, it follows from point (iv) of that theorem. Indeed, recall that $\lie{so}(3,1) \simeq \lie{sl}_2(\CC)$ is ``already'' a complex Lie algebra; and note that in the basis $(\eps_1, \eps_2)$ we have chosen, the formula given in \cite[Table~2]{Smi20barx} for the root lattice $Q$ still applies, even for the non-simple algebra $\lie{so}(3,1)^\CC \simeq \lie{so}_4(\CC)$.
\end{itemize}
\item We distinguish three cases, depending on the value of $n$:
\begin{itemize}
\item The case $n = 2$ is straightforward. (As a warm-up exercise, the reader can also treat this case by the technique outlined below for the $n \geq 4$ case. A crucial point is that the pair of columns \begin{tikzpicture}[baseline={([yshift=-.5ex]current bounding box.center)}]
\Yfillcolor{black!20}
\tgyoung(0cm,0cm,:|1)
\Yfillopacity{0}
\Ylinecolor{black!30}
\tgyoung(0cm,0cm,1<\overline{1}>)
\Ylinecolor{black}
\tgyoung(0cm,0cm,_2)
\end{tikzpicture} is admissible in type $B_1$, but not admissible in types $B_r$ with $r \geq 2$ or $D_r$.)
\item The case $n = 3$ is presumably well-known. Recall that, since $\lie{so}_4(\CC) \simeq \lie{so}_3(\CC) \oplus \lie{so}_3(\CC)$, every irreducible representation of $V_{c_1 \varpi_1 + c_2 \varpi_2}$ of $\lie{so}_4(\CC)$ is isomorphic to the tensor product of the two representations $V_{c_1 \varpi_1}$ and $V_{c_2 \varpi_2}$ of the respective simple summands. Since both of these summands are isomorphic to $\lie{so}_3(\CC)$, all the weights of $V_{c_1 \varpi_1}$ and of $V_{c_1 \varpi_1}$ have multiplicity~$1$; hence the same holds for the weights of the product representation $V_{c_1 \varpi_1 + c_2 \varpi_2}$.
\item Finally let $n \geq 4$, and assume from now on that $\lambda$ satisfies \eqref{eq:star}. Then the machinery developed in \cite[Sec.~5]{Smi20barx} (an explicit version of Littelmann's path model \cite{Lit95}) tells
us\footnote{Combining \cite[Cor.~5.5]{Smi20barx} with \cite[Prop.~5.30]{Smi20barx} immediately yields this claim, even though only a weaker form is stated there: \cite[Cor.~5.31]{Smi20barx} only discerns zero from nonzero.}
that the dimension of $V^{\lie{l}(\lie{g}_\RR)}_\lambda$ is equal to the number of doubled Young tableaux (see \cite[Def.~5.23]{Smi20barx}) satisfying the conditions (H1) through (H7) of \cite[Cor.~5.31]{Smi20barx}. Let $\mathcal{T}$ be such a doubled Young tableau. We then successively derive several properties of it, which, taken together, will constrain it to a unique value.
\begin{itemize}
\item $\mathcal{T}$ may contain no symbols other than $1$, $2$, $\overline{2}$ or $\overline{1}$. This follows from \cite[Prop.~6.1.(i)]{Smi20barx}, applied to $x=1$ (keep in mind that $\Theta(\lie{g}_{\RR}) = \Pi_{[2,r]}$ in our case: see \cite[Table~8]{Smi20barx}). Indeed, it yields the inequality $t \leq \frac{h}{2} + 1$, where $h$ refers to the height of the Young tableau~$\mathcal{T}$ (which in our case is at most~$2$ since $\lambda_3 = 0$), and $t$ refers to the largest absolute value of a symbol that may appear in $\mathcal{T}$.
\item We know that $\mathcal{T}$ has nondecreasing values along each row (for the obvious order \eqref{eq:obvious_order}: use \cite[Rem.~5.17]{Smi20barx}, or, more simply, see the discussion preceding \eqref{eq:obvious_order} below). Consequently, it can be entirely described by the values of the parameters $0 \leq a_1 \leq a_2 \leq a_{\overline{2}} \leq a_{\overline{1}} = \lambda_1$ and $0 \leq b_1 \leq b_2 \leq b_{\overline{2}} \leq b_{\overline{1}} = \lambda_2$ (see Figure~\ref{fig:explicit_tableau}), where we set, for $s \in \{1, 2, \overline{2}, \overline{1}\}$,
\[a_s := \frac{1}{2} \# \threeind{}{\preceq s}{1} \mathcal{T}
\quad\text{and}\quad
b_s := \frac{1}{2} \# \threeind{}{\preceq s}{2} \mathcal{T}\]
in the notations of \cite[Def.~4.2]{Smi20barx}. In other terms, $a_s$ (resp. $b_s$) is half the number of boxes in the first (resp. second) row of $\mathcal{T}$ containing symbols not exceeding $s$.
\item $b_1 = 0$, since $\mathcal{T}$ also has standard columns, i.e.\ strictly increasing values along each column.
\item $b_2 = b_1 = 0$ and $a_{\overline{2}} = a_2$. Indeed, since $\mathcal{T}$ is $\alpha_2$-codominant, the leftmost (resp. rightmost) occurrence in $\mathcal{T}$ of any of the symbols in $\{2, \overline{2}\}$ must be a $\overline{2}$ (resp. a $2$). (This can be seen as a special case of \cite[Lem.~6.4]{Smi20barx} applied to $s = 2$, but here the proof is simpler because the symbols $3$ and $\overline{3}$ do not appear at all.) And moreover, that leftmost (resp. rightmost) occurrence must necessarily be in the second (resp. first) row (this can be read off \cite[(6.9)]{Smi20barx}, or, once again, checked directly by hand).
\item $a_2 = a_1 + b_{\overline{2}} = \frac{\lambda_1 + \lambda_2}{2}$. This follows from the fact that $\mathcal{T}$ is null, i.e.\ that, for every $s$, it contains the same number of symbols $s$ and $\overline{s}$.
\item $b_{\overline{2}} \leq a_1$: otherwise we would have a
\begin{tikzpicture}[baseline={([yshift=-.5ex]current bounding box.center)}]
\Yfillcolor{black!20}
\tgyoung(0cm,0cm,,|1)
\Yfillopacity{0}
\Ylinecolor{black!30}
\tgyoung(0cm,0cm,2,<\overline{2}>)
\Ylinecolor{black}
\tgyoung(0cm,0cm,|2)
\end{tikzpicture}
column, which is forbidden (by (H1), i.e.\ strong standardness: see \cite[Def.~5.14]{Smi20barx}).
\item If $b_{\overline{2}} < a_1$, then $b_{\overline{2}} = b_{\overline{1}} = \lambda_2$: otherwise we would have a
\begin{tikzpicture}[baseline={([yshift=-.5ex]current bounding box.center)}]
\Yfillcolor{black!20}
\tgyoung(0cm,0cm,,|1)
\Yfillopacity{0}
\Ylinecolor{black!30}
\tgyoung(0cm,0cm,1,<\overline{1}>)
\Ylinecolor{black}
\tgyoung(0cm,0cm,|2)
\end{tikzpicture}
column, which is once again forbidden by strong standardness.
\end{itemize}
All of these constraints taken together then force a unique value for each of the parameters $a_s$ and $b_s$, namely:
\begin{equation}
\label{eq:so_n_1_invariant_tableau}
\begin{cases}
a_1 = \max(\frac{\lambda_1 + \lambda_2}{4}, \frac{\lambda_1 - \lambda_2}{2}), \\
a_2 = a_{\overline{2}} = \frac{\lambda_1 + \lambda_2}{2}, \\
a_{\overline{1}} = \lambda_1,
\end{cases}
\qquad\text{and}\qquad
\begin{cases}
b_1 = b_2 = 0, \\
b_{\overline{2}} = \min(\frac{\lambda_1 + \lambda_2}{4}, \lambda_2), \\
b_{\overline{1}} = \lambda_2.
\end{cases}
\end{equation}
This shows that $\dim V^{\lie{l}(\lie{g}_\RR)}_\lambda \leq 1$. The opposite inequality is already given in \ref{itm:V_L_nontrivial}, so the conclusion follows.

\begin{figure}
\centering
\caption{\label{fig:explicit_tableau} The unique doubled Young tableau $\mathcal{T}$ satisfying conditions (H1) through (H7) of \cite[Cor.~5.31]{Smi20barx}, for any $\lambda$ whose coordinates $\lambda_1$ and $\lambda_2$ are integer with even sum. (We assume $n \geq 4$ here.)}
\begin{subfigure}[t]{0.5\textwidth}
\centering
\caption{\label{fig:lambda2_small} The case $\lambda_2 \leq \frac{\lambda_1}{3}$.\\ (Here $(\lambda_1, \lambda_2) = (4, 2)$).}
\vspace{3mm}
\begin{tikzpicture}[x=0.457cm,y=0.457cm]
\Yfillcolor{black!20}
\tgyoung(0cm,0cm,::::::|1|1,|1|1|1|1)
\Yfillopacity{0}
\Ylinecolor{black!30}
\tgyoung(0cm,0cm,111222<\overline{1}><\overline{1}>,<\overline{2}><\overline{2}><\overline{2}><\overline{1}>)
\Ylinecolor{black}
\tgyoung(0cm,0cm,_2_2_2_2,_2_2)
\draw[->](3,2)--(3,1);
\node[above] at (3,2) {$a_1 = \frac{\lambda_1+\lambda_2}{4}$};
\draw[->](6,4)--(6,1);
\node[above] at (6,4) {$a_2 = a_{\overline{2}} = \frac{\lambda_1+\lambda_2}{2}$};
\draw[->](8,2)--(8,1);
\node[above] at (8,2) {$a_{\overline{1}} = \lambda_1$};
\draw[->](0,-2)--(0,-1);
\node[below] at (0,-2) {$b_1 = b_2 = 0$};
\draw[->](3,-3)--(3,-1);
\node[below] at (3,-3) {$b_{\overline{2}} = \frac{\lambda_1+\lambda_2}{2}$};
\draw[->](5,-2)--(4,-1);
\node[below right] at (5,-2) {$b_{\overline{1}} = \lambda_2$};
\end{tikzpicture}
\end{subfigure}%
~
\begin{subfigure}[t]{0.5\textwidth}
\centering
\caption{\label{fig:lambda2_big} The case $\lambda_2 \geq \frac{\lambda_1}{3}$.\\ (Here $(\lambda_1, \lambda_2) = (5, 1)$).}
\vspace{3mm}
\begin{tikzpicture}[x=0.457cm,y=0.457cm]
\Yfillcolor{black!20}
\tgyoung(0cm,0cm,::::::|1|1|1|1,|1|1)
\Yfillopacity{0}
\Ylinecolor{black!30}
\tgyoung(0cm,0cm,111122<\overline{1}><\overline{1}><\overline{1}><\overline{1}>,<\overline{2}><\overline{2}>)
\Ylinecolor{black}
\tgyoung(0cm,0cm,_2_2_2_2_2,_2)
\draw[->](3,2)--(4,1);
\node[above] at (3,2) {$a_1 = \frac{\lambda_1-\lambda_2}{2}$};
\draw[->](6,4)--(6,1);
\node[above] at (6,4) {$a_2 = a_{\overline{2}} = \frac{\lambda_1+\lambda_2}{2}$};
\draw[->](10,2)--(10,1);
\node[above] at (10,2) {$a_{\overline{1}} = \lambda_1$};
\draw[->](0,-2)--(0,-1);
\node[below] at (0,-2) {$b_1 = b_2 = 0$};
\draw[->](3,-2)--(2,-1);
\node[below right] at (3,-2) {$b_{\overline{2}} = b_{\overline{1}} =\lambda_2$};
\end{tikzpicture}
\end{subfigure}
\end{figure}

Alternatively, one may also directly verify that the values \eqref{eq:so_n_1_invariant_tableau} define a tableau~$\mathcal{T}$ that satisfies the required conditions (shown in Figure~\ref{fig:explicit_tableau}). This is mostly straightforward: most of the properties are true by construction. The only slightly nontrivial property is (H3) (partition into admissible pairs, in the sense of \cite[Rem.~3.4]{Lit90}). Let us check it:
\begin{itemize}
\item Most of the values of the parameters $a_s$ and $b_s$ are integer. This means that, for most even (see \cite[Rem.~5.29]{Smi20barx}) values of $j$, the columns $\threeind{j}{}{} \mathcal{T}$ and $\threeind{j-1}{}{} \mathcal{T}$ simply coincide, so automatically form an admissible pair.
\item The only exception is $a_1$ and $b_{\overline{2}}$, if $\frac{\lambda_1 + \lambda_2}{2}$ is odd and $\lambda_2 > \frac{\lambda_1}{3}$. In this case, we get a single even value of $j$ (namely $j = 2a_1 + 1$) for which the $j-1$-st and $j$-th columns differ. This pair of columns is equal to
\begin{tikzpicture}[baseline={([yshift=-.5ex]current bounding box.center)}]
\Yfillcolor{black!20}
\tgyoung(0cm,0cm,,|1|1:)
\Yfillopacity{0}
\Ylinecolor{black!30}
\tgyoung(0cm,0cm,1:2,<\overline{2}><\overline{1}>)
\Ylinecolor{black}
\tgyoung(0cm,0cm,|2|2)
\end{tikzpicture};
when taken in the reverse order (because of the order inversion between tableaux and paths, see \cite[(5.20)]{Smi20barx}), it clearly is admissible (see \cite[Prop.~5.21]{Smi20barx}).
\end{itemize}
Also note that in type $D_r$, we have $r > 2$, which gives two significant simplifications:
\begin{itemize}
\item For (H2) (nondecreasingness in horizontal direction), in type $D_r$, the tableau $\mathcal{T}$ does not contain any symbols $r$ or $\overline{r}$. This means that we can ignore the subtleties of \cite[Def.~5.16]{Smi20barx}: in our case the order $\preceq^{\lie{g}}_Y$ coincides with the ordinary order $\preceq_Y$; explicitly:
\begin{equation}
\label{eq:obvious_order}
1 \preceq^{\lie{g}}_Y 2 \preceq^{\lie{g}}_Y \overline{2} \preceq^{\lie{g}}_Y \overline{1}.
\end{equation}
\item We can ignore (H6), which is automatically true since the tableau $\mathcal{T}$ only has height $2$ and does not contain any columns of height $r$.
\end{itemize}
\end{itemize}
\item Assume once again that $\lambda$ satisfies \eqref{eq:star}. Recall that the set of such $\lambda$ (denoted in \cite{Smi20barx} by $\mathcal{M}_{\lie{l}\text{-inv}}$) is an additive monoid \cite[Prop.~2.5]{Smi20barx}.

Since $V^{\lie{l}(\lie{g}_\RR)}_\lambda$ is then $1$-dimensional and $w_0$ acts on it as an involution, we necessarily have
\begin{equation}
\restr{w_0}{V^{\lie{l}(\lie{g}_\RR)}_\lambda} = \sigma(\lambda) \Id
\end{equation}
for some function $\sigma: \mathcal{M}_{\lie{l}\text{-inv}}(\lie{g}_\RR) \to \{\pm 1\}$ to be determined. Moreover, from the properties of the Cartan product (compare the proof of \cite[Prop.~1.(iii)]{Smi20}) it immediately follows that $\sigma$ is a semigroup morphism. So it suffices to establish the identity \eqref{eq:action_of_w0} for a basis of the monoid $\mathcal{M}_{\lie{l}\text{-inv}}$.

To compute this basis, we use the well-known formulas (recalled in \cite[Table~2]{Smi20barx} for simple $\lie{g}$, but also valid for $\lie{so}_4(\CC)$) describing the Weyl chamber $\lie{h}^+$:
\begin{itemize}
\item for $n = 2$, the basis contains only $\lambda = \eps_1$, which corresponds to the adjoint representation;
\item for $n = 3$, the basis contains $\lambda = \eps_1 \pm \eps_2$, which correspond to the two irreducible summands of the adjoint representation;
\item for $n \geq 4$ (regardless of the parity), the basis contains $\lambda = \eps_1+\eps_2$, which corresponds to the adjoint representation, and $\lambda = 2\eps_1$, which corresponds to the main irreducible summand of the second symmetric power of the standard representation.
\end{itemize}
Checking \eqref{eq:action_of_w0} in all of these cases is then a straightforward computation. \qedhere
\end{hypothenum}
\end{proof}

\section{Reduction to the case of $\lie{so}(1,n)$}

\begin{theorem}
\label{reduction_to_so_n_1}
The algebra $\lie{g}_\RR$ contains a reductive subalgebra $\lie{s}_\RR$ with the following properties:
\begin{hypothenum}
\item \label{itm:made_of_so_n_1} Every simple summand of $\lie{s}_\RR$ is either abelian, compact, or isomorphic to $\lie{so}(1, n)$ for some $n \geq 2$.
\item \label{itm:same_Cartan} $\lie{s}_\RR$ shares with $\lie{g}_\RR$ the same Cartan subalgebra $\lie{h}$ and Cartan subspace $\lie{a}_\RR$.
\item \label{itm:same_w0} Denote by $S_\RR$ the connected subgroup of $G_\RR$ with Lie algebra $\lie{s}_\RR$. Then the longest element $w_0$ of the restricted Weyl group of $S_\RR$ is compatible with the $w_0$ of $G_\RR$ (i.e.\ some representative of $w_0$ in $S_\RR$ is also a representative of $w_0$ in $G_\RR$).
\end{hypothenum}
\end{theorem}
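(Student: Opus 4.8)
The plan is to produce $\lie{s}_\RR$ explicitly as the real form of a subalgebra spanned by $\lie{h}$ together with a carefully chosen family of root spaces, and to reduce the entire statement to one combinatorial fact about how $w_0$ acts on $\lie{a}_\RR$. Write $\Sigma \subseteq \lie{a}_\RR^*$ for the restricted root system (the nonzero weights of $\lie{a}_\RR$ on $\lie{g}_\RR$) and $W$ for its Weyl group. I would first dispose of the subtlety in \ref{itm:same_w0}: if the longest element $w_0^{S_\RR}$ of the restricted Weyl group of $S_\RR$ induces the same linear map on $\lie{a}_\RR$ as $w_0$ does, then any representative $n \in N_{S_\RR}(\lie{a}_\RR)$ of $w_0^{S_\RR}$ automatically lies in $N_{G_\RR}(\lie{a}_\RR)$ and acts on $\lie{a}_\RR$ as $w_0$, hence is simultaneously a representative of $w_0$ in $G_\RR$. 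So \ref{itm:same_w0} reduces to the \emph{linear} identity $w_0^{S_\RR} = w_0$ on $\lie{a}_\RR$.

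The combinatorial heart is then to produce pairwise strongly orthogonal restricted roots $\beta_1, \ldots, \beta_k \in \Sigma$ (meaning $\beta_i \pm \beta_j \notin \Sigma$ for $i \neq j$) with $2\beta_i \notin \Sigma$ for every $i$ and $s_{\beta_1} \cdots s_{\beta_k} = w_0$. Since $w_0$ is an involution of $\Sigma$, a standard property of involutions in finite reflection groups expresses it as a product of reflections in \emph{mutually orthogonal} roots whose span is exactly the $(-1)$-eigenspace $\lie{a}_\RR^-$ of $w_0$; those roots necessarily lie in $\Sigma \cap \lie{a}_\RR^-$, so $-\Id$ belongs to the Weyl group of this subsystem. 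I would then strengthen the choice, working component by component on the irreducible pieces of $\Sigma \cap \lie{a}_\RR^-$ (each of which has $-\Id$ in its Weyl group): on each one exhibits strongly orthogonal roots whose reflections multiply to $-\Id$ and which are reduced in $\Sigma$ — taking long roots wherever $\Sigma$ is non-reduced (type $BC$), so that $2\beta_i \notin \Sigma$. Concatenating over the components yields the desired $\beta_i$, and since they are orthogonal and span $\lie{a}_\RR^-$, the product $s_{\beta_1}\cdots s_{\beta_k}$ is $-\Id$ on $\lie{a}_\RR^-$ and $+\Id$ on its orthogonal complement, i.e. equal to $w_0$.

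With the $\beta_i$ in hand I would set $\Delta_{\lie{s}} := \{\alpha \in \Delta : \restr{\alpha}{\lie{a}_\RR} \in \{0, \pm\beta_1, \ldots, \pm\beta_k\}\}$ and $\lie{s} := \lie{h} \oplus \bigoplus_{\alpha \in \Delta_{\lie{s}}} \lie{g}_\alpha$. Strong orthogonality is precisely what makes $\Delta_{\lie{s}}$ a closed subsystem: the only way to leave the allowed list of restricted roots by summing two elements of $\Delta_{\lie{s}}$ would produce $\beta_i \pm \beta_j \notin \Sigma$, so no such sum is a root. Hence $\lie{s}$ is a reductive subalgebra containing $\lie{h}$; and because restricted roots take real values on $\lie{a}_\RR$, the conjugation defining $\lie{g}_\RR$ fixes $\lie{a}_\RR$ and preserves $\Delta_{\lie{s}}$, so $\lie{s}_\RR := \lie{s} \cap \lie{g}_\RR$ is a real form of $\lie{s}$ with $\lie{a}_\RR \subseteq \lie{h} \subseteq \lie{s}$ and $\lie{a}_\RR$ maximal split in $\lie{s}_\RR$, which is \ref{itm:same_Cartan}. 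By construction the restricted root system of $\lie{s}_\RR$ is $\{\pm\beta_i\}$: orthogonal and, since $2\beta_i \notin \Sigma$, reduced, i.e. a union of $k$ copies of $A_1$. Each noncompact simple summand therefore has an irreducible (reduced) $A_1$ restricted system and so is $\lie{so}(1, n_i)$ for some $n_i \geq 2$ — rather than $\lie{su}(1,n)$, $\lie{sp}(1,n)$ or $\lie{f}_{4(-20)}$, which carry non-reduced $BC_1$ — while $\lie{a}_\RR^-{}^\perp \cap \lie{a}_\RR$ lands in the center; this gives \ref{itm:made_of_so_n_1}. Finally the restricted Weyl group of $S_\RR$ is $(\ZZ/2)^k$ with longest element $s_{\beta_1} \cdots s_{\beta_k} = w_0$, which by the first paragraph yields \ref{itm:same_w0}.

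I expect the main obstacle to be the second step: arranging the $\beta_i$ to be simultaneously \emph{strongly} orthogonal — so that $\lie{s}$ is a genuine direct sum of commuting ideals rather than an entangled subalgebra, which is exactly what lets the closed-subsystem argument go through — \emph{and} reduced, so that the rank-one pieces are orthogonal groups. Plain orthogonality is automatic from the involution structure, but these two refinements interact with the possibly non-reduced shape of $\Sigma$ and most likely force a short case analysis over the irreducible components of $\Sigma \cap \lie{a}_\RR^-$ (distinguishing the long-root choice in types $C$ and $BC$ from the $e_i \pm e_j$ choice in types $B$ and $D$, and treating the exceptional components separately). Once these roots are secured, the construction of $\lie{s}_\RR$ and the verification of \ref{itm:made_of_so_n_1}--\ref{itm:same_w0} are essentially formal.
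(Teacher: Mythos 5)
Your construction is essentially the paper's: it too defines $\lie{s}_\RR$ as the sum of $\lie{l} \cap \lie{g}_\RR$ and the restricted root spaces $\lie{g}_\RR^{\pm\alpha}$ for a set $\Xi$ of pairwise strongly orthogonal restricted roots whose reflections compose to $w_0$, and it verifies the three properties just as you do (closedness of the chosen set of restricted roots, restricted root system of type $A_1^{\#\Xi}$, irreducibility of the restricted root system of a noncompact simple algebra, and the classification fact that a reduced restricted system of type $A_1$ forces $\lie{so}(1,n)$). The one place where you genuinely diverge is the key combinatorial input, which the paper isolates as Lemma~\ref{strongly_orth_decomp} (attributed to \cite{AK84}) and proves by citation: for reduced irreducible systems it points to a table in \cite{LFlSm}, and for $BC_n$ it takes $\setsuch{2\eps_i}{1 \leq i \leq n}$, the same set as for $C_n$. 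You instead derive it from Carter's theorem on involutions in reflection groups, which buys an abstract reduction: the case-by-case work only has to cover irreducible components of $\Sigma \cap \lie{a}_\RR^-$, i.e.\ systems with $-\Id$ in their Weyl group ($A_1$, $B$, $C$, $D_{\mathrm{even}}$, $E_7$, $E_8$, $F_4$, $G_2$, $BC$), rather than all types and all longest elements. But you still end in a case check that you only sketch (the exceptional types are deferred entirely), so in substance both proofs of the lemma rest on classification data; yours is somewhat more self-contained in spirit, the paper's is shorter because the tables already exist.

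One correction to your bookkeeping: strong orthogonality alone does \emph{not} make $\Delta_{\lie{s}}$ closed, contrary to what your last paragraph asserts. If $\alpha, \alpha' \in \Delta$ both restrict to the \emph{same} $\beta_i$, their sum restricts to $2\beta_i$, and in a $BC_1$ situation such sums genuinely occur as roots --- indeed in $\lie{su}(1,2)$, for instance, $[\lie{g}^{\beta}, \lie{g}^{\beta}] = \lie{g}^{2\beta} \neq 0$. So the hypothesis $2\beta_i \notin \Sigma$, which you motivate only by the need for the rank-one summands to be reduced (hence isomorphic to some $\lie{so}(1,n)$ rather than $\lie{su}(1,n)$, $\lie{sp}(1,n)$ or $\lie{f}_{4(-20)}$), is also indispensable for $\lie{s}$ to be a subalgebra at all. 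Since you do impose that hypothesis, your construction is sound as written; only the stated reason for closedness needs to be completed by the self-sum case.
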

Moreover, this subalgebra $\lie{s}_\RR$ can be explicitly described: see Appendix. 

We start by recalling the following lemma, slightly generalizing a construction by \cite{AK84}:
\begin{lemma}
\label{strongly_orth_decomp}
Every root system $\Sigma$ (not necessarily reduced) has a subset $\Xi$ of pairwise strongly orthogonal roots such that the longest element $w_0$ of $\Sigma$'s Weyl group is equal to the product $\prod_{\alpha \in \Xi} s_{\alpha}$ of the reflections with respect to these roots.
\end{lemma}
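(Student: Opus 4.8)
The plan is to prove the statement by induction on the rank of $\Sigma$, peeling off one strongly orthogonal root at each step. First I reduce to the irreducible case: a general root system decomposes as an orthogonal direct sum $\Sigma = \Sigma^{(1)} \sqcup \dots \sqcup \Sigma^{(k)}$ of irreducible ones, its Weyl group is the corresponding direct product, and $w_0 = w_0^{(1)} \cdots w_0^{(k)}$. Since roots belonging to distinct components are strongly orthogonal (a sum of two such roots is never a root, as every root lies in a single component), it suffices to treat each $\Sigma^{(i)}$ separately; when $k \geq 2$ each component has strictly smaller rank, so the induction hypothesis applies. The base case (rank $0$, or rank $1$ where $w_0 = s_\alpha$ and $\Xi = \{\alpha\}$) is immediate.

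So assume $\Sigma$ irreducible of rank $r$, fix a positive system $\Sigma^+$, and let $\theta$ be its (unique) highest root. Two facts drive the argument. First, $w_0 \theta = -\theta$: indeed $-w_0$ permutes $\Sigma^+$ and preserves the set of simple roots (for reduced irreducible $\Sigma$ it is the diagram involution; for the non-reduced type $w_0 = -\Id$), hence it preserves the height function and therefore fixes the unique maximal root $\theta$. Second, $\theta$ is strongly orthogonal to every root in $\Sigma_\theta := \Sigma \cap \theta^{\perp}$: for $\alpha \perp \theta$ the $\alpha$-string through $\theta$ is symmetric, so $\theta + \alpha \in \Sigma \iff \theta - \alpha \in \Sigma$, and maximality of $\theta$ rules out $\theta \pm \alpha \in \Sigma$. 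Note $\Sigma_\theta$ is a root system spanning a subspace of $\theta^\perp$, hence of rank at most $r-1$.

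The heart of the proof is the factorization $w_0 = s_\theta \, w_{0,\theta}$, where $w_{0,\theta}$ denotes the longest element of the Weyl group $W(\Sigma_\theta)$ (acting trivially on $\RR\theta$). To establish it, set $u := s_\theta w_0$. Since $w_0\theta = -\theta$, we get $u(\theta) = \theta$, so $u$ lies in the stabilizer $\Stab_W(\theta)$; by Steinberg's theorem this stabilizer is generated by the reflections it contains, namely the $s_\alpha$ with $\alpha \perp \theta$, so $\Stab_W(\theta) = W(\Sigma_\theta)$ and thus $u \in W(\Sigma_\theta)$. Moreover $u$ maps $\Sigma_\theta^+$ into $\Sigma_\theta^-$: for $\alpha \in \Sigma_\theta^+$ one has $w_0\alpha \in \Sigma^- \cap \theta^\perp = \Sigma_\theta^-$ (because $w_0$ preserves $\theta^\perp = (w_0\theta)^\perp$), and $s_\theta$ fixes this vector. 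The longest element being the unique element of $W(\Sigma_\theta)$ with this property, we get $u = w_{0,\theta}$, i.e.\ $w_0 = s_\theta w_{0,\theta}$. Applying the induction hypothesis to $\Sigma_\theta$ yields a pairwise strongly orthogonal subset $\Xi' \subseteq \Sigma_\theta$ with $w_{0,\theta} = \prod_{\alpha \in \Xi'} s_\alpha$, and since $\theta$ is strongly orthogonal to every element of $\Xi' \subseteq \Sigma_\theta$, the set $\Xi := \{\theta\} \cup \Xi'$ is pairwise strongly orthogonal and satisfies $w_0 = \prod_{\alpha \in \Xi} s_\alpha$, as desired.

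The main obstacle is precisely the factorization step $u \in W(\Sigma_\theta)$: it is not a priori clear that $s_\theta w_0$, built from elements of the large Weyl group, actually lies in the reflection subgroup attached to the orthogonal subsystem. This is resolved cleanly by Steinberg's description of point stabilizers in reflection groups; alternatively one could verify directly that $s_\theta w_{0,\theta}$ sends every positive root of $\Sigma$ to a negative one and invoke uniqueness of $w_0$, but tracking the roots not orthogonal to $\theta$ makes that route more laborious. A secondary point requiring care — and the reason the lemma asks for \emph{strong} rather than mere orthogonality — is the root-string verification that the peeled-off highest root $\theta$ is strongly orthogonal to all of $\Sigma_\theta$; this is exactly where the maximality of $\theta$ is used.
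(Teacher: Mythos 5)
Your proof is correct, but it takes a genuinely different route from the paper's. The paper reduces to irreducible components exactly as you do, but then simply cites known data: for each irreducible \emph{reduced} type it invokes the explicit sets $\Xi$ tabulated in \cite[Table~2]{LFlSm}, and for $BC_n$ it observes that the $C_n$ set $\setsuch{2\eps_i}{1 \leq i \leq n}$ still works. Your argument replaces this case-by-case citation with a uniform, classification-free induction --- essentially the cascade construction that the paper attributes to \cite{AK84}: peel off the highest root $\theta$, check via root-string symmetry and maximality that $\theta$ is strongly orthogonal to $\Sigma_\theta = \Sigma \cap \theta^{\perp}$, and prove the factorization $w_0 = s_\theta\, w_{0,\theta}$ by combining Steinberg's fixed-point theorem (giving $s_\theta w_0 \in \Stab_W(\theta) = W(\Sigma_\theta)$) with the uniqueness of the element of $W(\Sigma_\theta)$ sending $\Sigma_\theta^+$ to $\Sigma_\theta^-$. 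All steps check out; in particular, strong orthogonality inside $\Sigma_\theta$ is the same as strong orthogonality inside $\Sigma$ (any sum of two elements of $\Sigma_\theta$ that is a root of $\Sigma$ lies in $\theta^\perp$, hence in $\Sigma_\theta$), so the inductive step really does produce a set that is strongly orthogonal in the ambient system, and the non-reduced case needs no special treatment beyond noting $w_0 = -\Id$ for $BC_n$. Two remarks on the trade-off. First, your appeal to Steinberg's theorem is avoidable: $\theta$ is dominant, so $\Stab_W(\theta)$ is the standard parabolic subgroup generated by the simple reflections fixing $\theta$, and that parabolic is contained in $W(\Sigma_\theta) \subseteq \Stab_W(\theta)$, forcing equality --- a more elementary route to the same identification. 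Second, what the paper's approach buys is not just brevity: the explicit per-type sets $\Xi$ from \cite{LFlSm} are exactly what feed into the appendix computations of $\lie{s}_\RR$, whereas your construction produces $\Xi$ only recursively; conversely, your proof is self-contained and works without consulting the classification, which the paper's does not.
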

(We recall that two roots $\alpha$ and~$\beta$ are called \emph{strongly} orthogonal if neither $\alpha + \beta$ nor $\alpha - \beta$ belong to the root system.)
\begin{proof}
Clearly it suffices to do this separately within each irreducible component. For irreducible reduced root systems, these subsets are given in \cite[Table~2]{LFlSm}. For $BC_n$, the same set as for $C_n$ (namely $\setsuch{2\eps_i}{1 \leq i \leq n}$) works.
\end{proof}

\begin{proof}[Proof of Theorem~\ref{reduction_to_so_n_1}]
We define $\lie{s}_\RR$ as the sum
\[\lie{s}_\RR := \lie{g}_\RR^0 \oplus \bigoplus_{\alpha \in \Xi} \left( \lie{g}_\RR^{\alpha} \oplus \lie{g}_\RR^{-\alpha} \right)\]
of the restricted root space $\lie{g}_\RR^0 = \lie{l} \cap \lie{g}_\RR$ corresponding to $0$ and those corresponding to the elements of the set $\Xi$ given by Lemma~\ref{strongly_orth_decomp} for the restricted root system $\Sigma$ of $\lie{g}_\RR$ and their negatives. It remains to check that this vector subspace of $\lie{g}_\RR$ has indeed the desired properties.
\begin{itemize}
\item The fact that $\lie{s}_\RR$ is a Lie subalgebra of $\lie{g}_\RR$ immediately follows from the identity $[\lie{g}_\RR^\alpha, \lie{g}_\RR^\beta] \subset \lie{g}_\RR^{\alpha + \beta}$ \cite[Prop.~6.40(b)]{Kna96first}: indeed, the sum of any two elements of $\{0\} \cup \setsuch{\pm \alpha}{\alpha \in \Xi}$ either remains within this set or (by strong orthogonality) falls outside $\Sigma$.
\item Since the set $\{0\} \cup \setsuch{\pm \alpha}{\alpha \in \Xi}$ is centrally symmetric, $\lie{s}_\RR$ is reductive (by \cite[Prop.~6.40(c) and Cor.~6.29]{Kna96first}).
\item Since $\lie{a}_\RR$ (resp. $\lie{h}$) is by construction contained in $\lie{s}_\RR$ (resp. in $\lie{s} := \lie{s}_\RR^\CC$), clearly it is also a Cartan subspace (resp. subalgebra) of $\lie{s}_\RR$ (resp. of $\lie{s}$).
\item To check property~\ref{itm:same_w0} (compatibility of the $w_0$ elements), note that for each $\alpha \in \Xi$, we can explicilty construct (by \cite[Prop.~6.52(c)]{Kna96first}) a representative $\overline{\sigma_\alpha} \in G_\RR$ of the reflection $s_\alpha \in W(G_\RR)$; and by construction this representative lies in~$S_\RR$. Hence their product $\prod_{\alpha \in \Xi} \overline{\sigma_\alpha}$ lies in $S_\RR$, and is a representative of $w_0(G_\RR) \in W(G_\RR)$. Since it normalizes $\lie{a}_\RR$ and acts by $-\Id$ on the subspace spanned by $\Xi$, it is also a representative of $w_0(S_\RR) \in W(S_\RR)$.
\item Finally, for property~\ref{itm:made_of_so_n_1}, note that, by construction, $\lie{s}_\RR$ has a restricted root system of type $A_1^s$ (where $s$ is the cardinality of $\Xi$). It is a fact that the restricted root system of every noncompact simple Lie algebra is irreducible (this can be checked case by case in the classification, or see \cite[Lem~15.5.6]{Spr98} for an abstract argument): hence every simple summand of $\lie{s}_\RR$ (ignoring the abelian part) is either compact or has a restricted root system of type $A_1$. To conclude, we check (by looking at the tables) that the only simple Lie algebras that have a restricted root system of type $A_1$ are the $\lie{so}(1,n)$ for $n \geq 2$. \qedhere
\end{itemize}

We can then find the set of roots of $\lie{s}_\RR$, as the set of preimages of $\{0\} \cup \setsuch{\pm \alpha}{\alpha \in \Xi}$ under the projection map from $\lie{h}$ to $\lie{a}$.
\end{proof}

\appendix
\section*{Appendix: making things explicit}
\renewcommand{\thesection}{A}

We can actually compute the algebra $\lie{s}_\RR$ for every real form $\lie{g}_\RR$. Obviously it can be done component by component (see also \cite[Thm.~2.9]{Smi20} for some more trivialities about reduction to simple components). So we will henceforth assume that $\lie{g}_\RR$ is simple.

We recall that a simple real Lie algebra $\lie{g}_\RR$ can be of two types:
\begin{itemize}
\item Either $\lie{g}_\RR$ is actually complex, i.e.\ is obtained from some simple complex Lie algebra $\lie{g}_\CC$ by restriction of scalars, so that its complexification $\lie{g}_\RR^\CC$ is isomorphic to $\lie{g}_\CC \oplus \lie{g}_\CC$.
\item Or $\lie{g}_\RR$ is absolutely simple, i.e.\ the complexification $\lie{g}_\RR^\CC$ is simple.
\end{itemize}

\begin{table}[p]
  \caption[caption]{\label{tab:s_roots} The Lie algebra $\lie{s}_\RR$ from Theorem~\ref{reduction_to_so_n_1} for all absolutely simple real Lie algebras~$\lie{g}_\RR$. We first specify the isomorphism type of~$\lie{s}_\RR$, and then, for each direct summand of~$\lie{s}_\RR$, we list its simple roots. We \colorbox{yellow}{highlight} all the roots that correspond to unblackened nodes in the Satake diagram of $\lie{s}_\RR$ (i.e.\ the set $\Pi_1$ in the notations of \cite[5.4.3°]{OV90}).
\\\hspace{\textwidth} 
To fully reconstruct that Satake diagram, it should be noted that the roots of each simple summand are always listed in the Bourbaki order; and that arrows never occur, except as noted in the footnotes.}
  \addtocounter{table}{-1}
  \centering\bigskip
  \makebox[\textwidth][c]{
  \begin{threeparttable}
  \begin{tabular}[t]{llll}
    $\lie{g}$ & $\lie{g}_\RR$ & Isom. type of $\lie{s}_\RR$ & Roots of each summand of $\lie{s}$ \\
    \midrule
    \multirow[t]{11}{*}{$\underset{r \geq 1}{A_r}$}
    & \multirow{2}{*}{\makecell[l]{$\lie{sl}_{r+1}(\RR)$}}
      & $\lie{so}(1,2)^{\lfloor \frac{r+1}{2} \rfloor}$ & $\left(\colorbox{yellow}{$\eps_i - \eps_{r+2-i}$}\right)_{i = 1, \ldots, \lfloor \frac{r+1}{2} \rfloor}$ \\
    & & $\oplus\; \RR^{\lfloor \frac{r}{2} \rfloor}$ \\ \cmidrule{2-4}
    & \multirow{3}{*}{\makecell[l]{$\lie{su}(p,r+1-p)$ \\ \quad \footnotesize $0 \leq p < \frac{r+1}{2}$}}
      & $\lie{so}(1,2)^p$ & $\left(\colorbox{yellow}{$\eps_i - \eps_{r+2-i}$}\right)_{i = 1, \ldots, p}$ \\
    & & $\oplus\; \lie{su}(r+1-2p)$ & $\alpha_{p+1}, \ldots, \alpha_{r-p}$ \\
    & & $\oplus\; \RR^p$ & \\ \cmidrule{2-4}
    & \multirow{2}{*}{\makecell[l]{$\lie{su}(p,p)$\\ \quad \footnotesize $2p - 1 = r$}}
      & $\lie{so}(1,2)^p$ & $\left(\colorbox{yellow}{$\eps_i - \eps_{r+2-i}$}\right)_{i = 1, \ldots, p}$ \\
    & & $\oplus\; \RR^{p-1}$ & \\ \cmidrule{2-4}
    & \multirow{2}{*}{\makecell[l]{$\lie{sl}_m(\HH)$\\ \quad \footnotesize $2m - 1 = r$, $m$ even}}
      & $\lie{so}(1,5)^{\frac{m}{2}}$ & $\left(\colorbox{yellow}{$\eps_{2i} - \eps_{r+2-2i}$},\; \alpha_{2i-1},\; \alpha_{r+2-2i}\right)_{i = 1, \ldots, \frac{m}{2}}$ \\
    & & $\oplus\; \RR^{\frac{m}{2}-1}$ & \\ \cmidrule{2-4}
    & \multirow{3}{*}{\makecell[l]{$\lie{sl}_m(\HH)$\\ \quad \footnotesize $2m - 1 = r$, $m$ odd}}
      & $\lie{so}(1,5)^{\frac{m-1}{2}}$ & $\left(\colorbox{yellow}{$\eps_{2i} - \eps_{r+2-2i}$},\; \alpha_{2i-1},\; \alpha_{r+2-2i}\right)_{i = 1, \ldots, \frac{m-1}{2}}$ \\
    & & $\oplus\; \lie{so}_3(\RR)$ & $\alpha_m$ \\
    & & $\oplus\; \RR^{\frac{m-1}{2}}$ & \\
    \midrule
    \multirow[t]{4}{*}{$\underset{n \geq 3,\; n \neq 4}{B_{\frac{n-1}{2}} \text{ or } D_{\frac{n}{2}}}$}
    & \multirow{2}{*}{\makecell[l]{$\lie{so}(p,n-p)$\\ \quad \footnotesize $0 \leq p \leq \frac{n}{2}$, $p$ even}}
      & $\lie{so}(1,2)^p$ & $(\colorbox{yellow}{$\eps_{2i-1} \pm \eps_{2i}$})_{i = 1, \ldots, \frac{p}{2}}$ \\
    & & $\oplus\; \lie{so}_{n-2p}(\RR)$ & $\alpha_{p+1}, \ldots, \alpha_{\lfloor \frac{n}{2} \rfloor}$\tnotex{tnote:so2} \\ \cmidrule{2-4}
    & \multirow{2}{*}{\makecell[l]{$\lie{so}(p,n-p)$\\ \quad \footnotesize $0 \leq p \leq \frac{n}{2}$, $p$ odd}}
      & $\lie{so}(1,2)^{p-1}$ & $(\colorbox{yellow}{$\eps_{2i-1} \pm \eps_{2i}$})_{i = 1, \ldots, \frac{p-1}{2}}$ \\
    & & $\oplus\; \lie{so}(1, n+1-2p)$ & $\colorbox{yellow}{$\alpha_p$}, \alpha_{p+1}, \ldots, \alpha_{\lfloor \frac{n}{2} \rfloor}$\tnotex{tnote:so2} \tnote{,} \tnotex{tnote:so13} \\
    \midrule
    \multirow[t]{3}{*}{$\underset{r \geq 1}{C_r}$}
    & $\lie{sp}_{\twice r}(\RR)$
      & $\lie{so}(1,2)^r$ & $(\colorbox{yellow}{$2\eps_i$})_{i = 1, \ldots, r}$ \\ \cmidrule{2-4}
    & \multirow{2}{*}{\makecell[l]{$\lie{sp}_\twice(p,r-p)$\\ \quad \footnotesize $0 \leq p \leq \frac{r}{2}$}}
      & $\lie{so}(1,4)^p$ & $(\colorbox{yellow}{$2\eps_{2i}$}, \eps_{2i-1} - \eps_{2i})_{i = 1, \ldots, p}$ \\
    & & $\oplus\; \lie{sp}_\twice(r-2p)$ & $\alpha_{2p+1}, \ldots, \alpha_r$ \\
    \midrule
    \multirow[t]{5}{*}{$\underset{r \geq 3}{D_r}$}
    & \multirow{2}{*}{\makecell[l]{$\lie{so}^*(2r)$\\ \quad \footnotesize $r$ even}}
      & $\lie{so}(1,2)^{\frac{r}{2}}$ & $(\colorbox{yellow}{$\eps_{2i-1} - \eps_{2i}$})_{i = 1, \ldots, \frac{r}{2}}$ \\
    & & $\oplus\; \lie{so}_3(\RR)^{\frac{r}{2}}$ & $(\eps_{2i-1} + \eps_{2i})_{i = 1, \ldots, \frac{r}{2}}$ \\ \cmidrule{2-4}
    & \multirow{3}{*}{\makecell[l]{$\lie{so}^*(2r)$\\ \quad \footnotesize $r$ odd}}
      & $\lie{so}(1,2)^{\frac{r-1}{2}}$ & $(\colorbox{yellow}{$\eps_{2i-1} - \eps_{2i}$})_{i = 1, \ldots, \frac{r-1}{2}}$ \\
    & & $\oplus\; \lie{so}_3(\RR)^{\frac{r-1}{2}}$ & $(\eps_{2i-1} + \eps_{2i})_{i = 1, \ldots, \frac{r-1}{2}}$ \\
    & & $\oplus\; \RR$ & \\
    \bottomrule
  \end{tabular}
  \vspace{2mm}
  \begin{tablenotes}
    \item\label{tnote:so2} If $n - 4 \lfloor \frac{p}{2} \rfloor = 2$, this list is empty instead (as $\lie{so}_2(\RR) \simeq \lie{so}(1,1) \simeq \RR$).
    \item\label{tnote:so13} If $n + 1 - 2p = 3$, then \emph{both} of the roots $\alpha_{\frac{n}{2}-1}$ and $\alpha_{\frac{n}{2}}$ are noncompact, and are joined together by an arrow.
  \end{tablenotes}
  \end{threeparttable}
  }
\end{table}

\clearpage

\begin{small}
\def\arraystretch{1.2}
\begin{longtable}{lllp{0.7mm}l}
\caption[caption]{The Lie algebra $\lie{s}_\RR$ from Theorem~\ref{reduction_to_so_n_1} for all simple real Lie algebras~$\lie{g}_\RR$ (continued).
\\\hspace{\textwidth} 
For exceptional $\lie{g}_\RR$, all roots are given by their coordinates in the basis formed by simple roots.} \\
    $\lie{g}$ & $\lie{g}_\RR$ & \multicolumn{2}{l}{\parbox[b]{2cm}{Isomorph. type of~$\lie{s}_\RR$}} & \parbox[b]{4.5cm}{Roots of each summand of $\lie{s}$ \\ (in $(\alpha_1, \ldots, \alpha_r)$ basis).} \\ \midrule
\endhead
    \topic{$E_6$}
               & \multirow[t]{5}{*}{$\underset{\text{(split)}}{E I}$ or $\underset{\text{(quasi-split)}}{E II}$}
                 & $\lie{so}(1,2)$ & & \colorbox{yellow}{$(0,0,0,1,0,0)$} \\*
    \lasttopic & & $\oplus\; \lie{so}(1,2)$ & & \colorbox{yellow}{$(0,0,1,1,1,0)$} \\*
    \lasttopic & & $\oplus\; \lie{so}(1,2)$ & & \colorbox{yellow}{$(1,0,1,1,1,1)$} \\*
    \lasttopic & & $\oplus\; \lie{so}(1,2)$ & & \colorbox{yellow}{$(1,2,2,3,2,1)$} \\*
    \lasttopic & & $\oplus\; \RR^2$ & & \\ \cmidrule{2-5}
    \lasttopic & \multirow[t]{6}{*}{$E III$}
                 & $\lie{so}(1,2)$ & & \colorbox{yellow}{$(1,0,1,1,1,1)$} \\*
    \lasttopic & & $\oplus\; \lie{so}(1,2)$ & & \colorbox{yellow}{$(1,2,2,3,2,1)$} \\*
    \lasttopic & & \multirow{3}{*}{$\oplus\; \lie{su}(4)$}
                 & \ldelim\{{3}{*} & \colorbox{white}{$(0,1,0,0,0,0)$,} \\*
    \lasttopic & & & &                 \colorbox{white}{$(0,0,1,0,0,0)$,} \\*
    \lasttopic & & & &                 \colorbox{white}{$(0,0,0,1,0,0)$} \\*
    \lasttopic & & $\oplus\; \RR$ & \\ \cmidrule{2-5}
    \lasttopic & \multirow[t]{6}{*}{$E IV$}
                 & \multirow{5}{*}{$\lie{so}(1,9)$}
                   & \ldelim\{{5}{*} & \colorbox{yellow}{$(1,2,2,3,2,1)$}, \\*
    \lasttopic & & & &                 \colorbox{white}{$(0,1,0,0,0,0)$,} \\*
    \lasttopic & & & &                 \colorbox{white}{$(0,0,0,1,0,0)$,} \\*
    \lasttopic & & & &                 \colorbox{white}{$(0,0,1,0,0,0)$,} \\*
    \lasttopic & & & &                 \colorbox{white}{$(0,0,0,0,1,0)$} \\*
    \lasttopic & & $\oplus\; \RR$ & \\ \midrule
    \topic{$E_7$}
               & \multirow[t]{7}{*}{$\underset{\text{(split)}}{E V}$}
                 & $\lie{so}(1,2)$ & & \colorbox{yellow}{$(0,1,0,0,0,0,0)$} \\*
    \lasttopic & & $\oplus\; \lie{so}(1,2)$ & & \colorbox{yellow}{$(0,0,1,0,0,0,0)$} \\*
    \lasttopic & & $\oplus\; \lie{so}(1,2)$ & & \colorbox{yellow}{$(0,1,1,2,1,0,0)$} \\*
    \lasttopic & & $\oplus\; \lie{so}(1,2)$ & & \colorbox{yellow}{$(0,0,0,0,1,0,0)$} \\*
    \lasttopic & & $\oplus\; \lie{so}(1,2)$ & & \colorbox{yellow}{$(0,1,1,2,2,2,1)$} \\*
    \lasttopic & & $\oplus\; \lie{so}(1,2)$ & & \colorbox{yellow}{$(0,0,0,0,0,0,1)$} \\*
    \lasttopic & & $\oplus\; \lie{so}(1,2)$ & & \colorbox{yellow}{$(2,2,3,4,3,2,1)$} \\ \cmidrule{2-5}
    \lasttopic & \multirow[t]{7}{*}{$E VI$}
                 & $\lie{so}(1,2)$ & & \colorbox{yellow}{$(0,0,1,0,0,0,0)$} \\*
    \lasttopic & & $\oplus\; \lie{so}(1,2)$ & & \colorbox{yellow}{$(0,1,1,2,1,0,0)$} \\*
    \lasttopic & & $\oplus\; \lie{so}(1,2)$ & & \colorbox{yellow}{$(0,1,1,2,2,2,1)$} \\*
    \lasttopic & & $\oplus\; \lie{so}(1,2)$ & & \colorbox{yellow}{$(2,2,3,4,3,2,1)$} \\*
    \lasttopic & & $\oplus\; \lie{so}_3(\RR)$ & & \colorbox{white}{$(0,1,0,0,0,0,0)$} \\*
    \lasttopic & & $\oplus\; \lie{so}_3(\RR)$ & & \colorbox{white}{$(0,0,0,0,1,0,0)$} \\*
    \lasttopic & & $\oplus\; \lie{so}_3(\RR)$ & & \colorbox{white}{$(0,0,0,0,0,0,1)$} \\ \cmidrule{2-5}
    \lasttopic & \multirow[t]{7}{*}{$E VII$}
                 & $\lie{so}(1,2)$ & & \colorbox{yellow}{$(0,0,0,0,0,0,1)$} \\*
    \lasttopic & & $\oplus\; \lie{so}(1,2)$ & & \colorbox{yellow}{$(0,1,1,2,2,2,1)$} \\*
    \lasttopic & & $\oplus\; \lie{so}(1,2)$ & & \colorbox{yellow}{$(2,2,3,4,3,2,1)$} \\*
    \lasttopic & & \multirow{4}{*}{$\oplus\; \lie{so}_8(\RR)$}
                   & \ldelim\{{4}{*} & \colorbox{white}{$(0,0,1,0,0,0,0)$,} \\*
    \lasttopic & & & &                 \colorbox{white}{$(0,0,0,1,0,0,0)$,} \\*
    \lasttopic & & & &                 \colorbox{white}{$(0,1,0,0,0,0,0)$,} \\*
    \lasttopic & & & &                 \colorbox{white}{$(0,0,0,0,1,0,0)$} \\ \midrule
    \topic{$E_8$}
               & \multirow[t]{8}{*}{$\underset{\text{(split)}}{E VIII}$}
                 & $\lie{so}(1,2)$ & & \colorbox{yellow}{$(0,1,0,0,0,0,0,0)$} \\*
    \lasttopic & & $\oplus\; \lie{so}(1,2)$ & & \colorbox{yellow}{$(0,0,1,0,0,0,0,0)$} \\*
    \lasttopic & & $\oplus\; \lie{so}(1,2)$ & & \colorbox{yellow}{$(0,1,1,2,1,0,0,0)$} \\*
    \lasttopic & & $\oplus\; \lie{so}(1,2)$ & & \colorbox{yellow}{$(0,0,0,0,1,0,0,0)$} \\*
    \lasttopic & & $\oplus\; \lie{so}(1,2)$ & & \colorbox{yellow}{$(0,1,1,2,2,2,1,0)$} \\*
    \lasttopic & & $\oplus\; \lie{so}(1,2)$ & & \colorbox{yellow}{$(0,0,0,0,0,0,1,0)$} \\*
    \lasttopic & & $\oplus\; \lie{so}(1,2)$ & & \colorbox{yellow}{$(2,3,4,6,5,4,3,2)$} \\*
    \lasttopic & & $\oplus\; \lie{so}(1,2)$ & & \colorbox{yellow}{$(2,2,3,4,3,2,1,0)$} \\ \cmidrule{2-5}
    \lasttopic & \multirow[t]{8}{*}{$E IX$}
                 & $\lie{so}(1,2)$ & & \colorbox{yellow}{$(0,0,0,0,0,0,1,0)$} \\*
    \lasttopic & & $\oplus\; \lie{so}(1,2)$ & & \colorbox{yellow}{$(0,1,1,2,2,2,1,0)$} \\*
    \lasttopic & & $\oplus\; \lie{so}(1,2)$ & & \colorbox{yellow}{$(2,2,3,4,3,2,1,0)$} \\*
    \lasttopic & & $\oplus\; \lie{so}(1,2)$ & & \colorbox{yellow}{$(2,3,4,6,5,4,3,2)$} \\*
    \lasttopic & & \multirow{4}{*}{$\oplus\; \lie{so}_8(\RR)$}
                   & \ldelim\{{4}{*} & \colorbox{white}{$(0,0,1,0,0,0,0,0)$,} \\*
    \lasttopic & & & &                 \colorbox{white}{$(0,0,0,1,0,0,0,0)$,} \\*
    \lasttopic & & & &                 \colorbox{white}{$(0,1,0,0,0,0,0,0)$,} \\*
    \lasttopic & & & &                 \colorbox{white}{$(0,0,0,0,1,0,0,0)$} \\ \midrule
    \topic{$F_4$}
               & \multirow[t]{4}{*}{$\underset{\text{(split)}}{F I}$}
                 & $\lie{so}(1,2)$ & & \colorbox{yellow}{$(0,1,0,0)$} \\*
    \lasttopic & & $\oplus\; \lie{so}(1,2)$ & & \colorbox{yellow}{$(0,1,2,0)$} \\*
    \lasttopic & & $\oplus\; \lie{so}(1,2)$ & & \colorbox{yellow}{$(0,1,2,2)$} \\*
    \lasttopic & & $\oplus\; \lie{so}(1,2)$ & & \colorbox{yellow}{$(2,3,4,2)$} \\ \cmidrule{2-5}
    \lasttopic & \multirow[t]{4}{*}{$F II$}
                 & \multirow{4}{*}{$\lie{so}(1,8)$}
                   & \ldelim\{{4}{*} & \colorbox{yellow}{$(0,1,2,2)$}, \\*
    \lasttopic & & & &                 \colorbox{white}{$(1,0,0,0)$,} \\*
    \lasttopic & & & &                 \colorbox{white}{$(0,1,0,0)$,} \\*
    \lasttopic & & & &                 \colorbox{white}{$(0,0,1,0)$} \\ \midrule
    \topic{$G_2$}
               & \multirow[t]{2}{*}{$\underset{\text{(split)}}{G}$}
                 & $\lie{so}(1,2)$ & & \colorbox{yellow}{$(1,0)$} \\*
    \lasttopic & & $\oplus\; \lie{so}(1,2)$ & & \colorbox{yellow}{$(3,2)$} \\
    \bottomrule
\end{longtable}
\end{small}

\subsubsection*{Case where $\lie{g}_\RR$ is absolutely simple}

For each absolutely simple real Lie algebra $\lie{g}_\RR$, an expression of the projection map from $\lie{h}$ to $\lie{a}$ in coordinates can be found for example in the column entitled ``$r$'' of \cite[Table~9]{OV90} (beware however that the authors' root numbering \cite[Table~1]{OV90} strongly differs from Bourbaki's). From there, one can then easily find the full (non-restricted) root system of $\lie{s}_\RR$, and then extract its simple roots. The results are tabulated in Table~\ref{tab:s_roots}.

\begin{remark}
For the classical algebras $\lie{g}_\RR$, the description of $\lie{s}_\RR$ can be simplified. Using exceptional isomorphisms, we can see that $\lie{s}_\RR$ is then as given in Table~\ref{tab:s_roots_rephrased}. The remarkable fact about this expression of~$\lie{s}_\RR$ is that the embedding $\lie{s}_\RR \to \lie{g}_\RR$ is then (up to conjugation) the ``obvious'' diagonal map.

In other terms, $\lie{s}_\RR$ is equal (as a subalgebra of $\lie{g}_\RR$) to the stabilizer of some decomposition $V_{\operatorname{def}} = V_1 \oplus \cdots \oplus V_x$ of the defining representation $V_{\operatorname{def}}$ of $\lie{g}_\RR$ (as given e.g.\ in \cite[Table~8]{OV90}). Here each subspace $V_i$ is the defining representation of the $i$-th direct summand of $\lie{s}_\RR$. Note however that in type $A_r$, $\lie{s}_\RR$ has an additional abelian summand, isomorphic to $\RR^{x-1}$: it comprises the (traceless) block-diagonal elements that act by homothety on each of the subspaces $V_1, \ldots, V_x$.
\end{remark}

\begin{table}[p]
  \caption[caption]{\label{tab:s_roots_rephrased} The Lie algebra $\lie{s}_\RR$ from Theorem~\ref{reduction_to_so_n_1} for all classical simple real Lie algebras~$\lie{g}_\RR$, expressed so that $\lie{s}_\RR \to \lie{g}_\RR$ is conjugate to the diagonal embedding.}
  \centering\bigskip
  \makebox[\textwidth][c]{
  \begin{tabular}[t]{lll}
    $\lie{g}$ & $\lie{g}_\RR$ & $\lie{s}$ \\
    \midrule
    \multirow[t]{5}{*}{$\underset{r \geq 1}{A_r}$}
    & $\lie{sl}_{r+1}(\RR)$ & $\lie{sl}_2(\RR)^{\lfloor \frac{r+1}{2} \rfloor} \;\oplus\; \RR^{\lfloor \frac{r}{2} \rfloor}$ \\ \cmidrule{2-3}
    & \makecell[l]{$\lie{su}(p,r+1-p)$\\ \quad \footnotesize $0 \leq p < \frac{r+1}{2}$}
      & $\lie{su}(1,1)^p \;\oplus\; \lie{su}(r+1-2p) \;\oplus\; \RR^p$ \\ \cmidrule{2-3}
    & \makecell[l]{$\lie{su}(p,p)$\\ \quad \footnotesize $2p - 1 = r$}
      & $\lie{su}(1,1)^p \;\oplus\; \RR^{p-1}$ \\ \cmidrule{2-3}
    & \makecell[l]{$\lie{sl}_m(\HH)$\\ \quad \footnotesize $2m - 1 = r$, $m$ even}
      & $\lie{sl}_2(\HH)^{\frac{m}{2}} \;\oplus\; \RR^{\frac{m}{2}-1}$ \\ \cmidrule{2-3}
    & \makecell[l]{$\lie{sl}_m(\HH)$\\ \quad \footnotesize $2m - 1 = r$, $m$ odd}
      & $\lie{sl}_2(\HH)^{\frac{m-1}{2}} \;\oplus\; \lie{sl}_1(\HH) \;\oplus\; \RR^{\frac{m-1}{2}}$ \\
    \midrule
    \multirow[t]{2}{*}{$\underset{n \geq 3,\; n \neq 4}{B_{\frac{n-1}{2}} \text{ or } D_{\frac{n}{2}}}$}
    & \makecell[l]{$\lie{so}(p,n-p)$\\ \quad \footnotesize $0 \leq p \leq \frac{n}{2}$, $p$ even}
      & $\lie{so}(2,2)^{\frac{p}{2}} \;\oplus\; \lie{so}_{n-2p}(\RR)$ \\ \cmidrule{2-3}
    & \makecell[l]{$\lie{so}(p,n-p)$\\ \quad \footnotesize $0 \leq p \leq \frac{n}{2}$, $p$ odd}
      & $\lie{so}(2,2)^{\frac{p-1}{2}} \;\oplus\; \lie{so}(1, n+1-2p)$ \\
    \midrule
    \multirow[t]{2}{*}{$\underset{r \geq 1}{C_r}$}
    & $\lie{sp}_{\twice r}(\RR)$
      & $\lie{sp}_{\twice 1}(\RR)^r$ \\ \cmidrule{2-3}
    & \makecell[l]{$\lie{sp}_\twice(p,r-p)$\\ \quad \footnotesize $0 \leq p \leq \frac{r}{2}$}
      & $\lie{sp}_\twice(1,1)^p \;\oplus\; \lie{sp}_\twice(r-2p)$ \\
    \midrule
    \multirow[t]{2}{*}{$\underset{r \geq 3}{D_r}$}
    & \makecell[l]{$\lie{so}^*(2r)$\\ \quad \footnotesize $r$ even}
      & $\lie{so}^*(4)^{\frac{r}{2}}$ \\ \cmidrule{2-3}
    & \makecell[l]{$\lie{so}^*(2r)$\\ \quad \footnotesize $r$ odd}
      & $\lie{so}^*(4)^{\frac{r-1}{2}} \;\oplus\; \lie{so}^*(2)$ \\
    \bottomrule
  \end{tabular}
  }
\end{table}

\subsection*{Case where $\lie{g}_\RR$ is complex}

Suppose now that the real Lie algebra $\lie{g}_\RR$ is in fact complex, or, to be more precise, is obtained from some complex Lie algebra $\lie{g}_\CC$ by restriction of scalars. This case is in theory straightforward, so was not included in the table. Essentially, the result is then the same as it would be if we replaced $\lie{g}_\RR$ by two copies of the split real form of $\lie{g}_\CC$.

In practice, however, restriction of scalars has tremendous potential for confusion; so let us still spell out some details.

We have a canonical isomorphism
\[\lie{g}_\RR^\CC \simeq \lie{g}_\CC \oplus \overline{\lie{g}_\CC},\]
where $\overline{\lie{g}_\CC}$ coincides with $\lie{g}_\CC$ as a set and as a real vector space, has the same Lie bracket, but has the opposite complex structure (i.e.\ the multiplication-by-$i$ map is negated). Under this isomorphism, the subalgebra $\lie{g}_\RR \subset \lie{g}_\RR^\CC$ identifies with the image of $\lie{g}_\CC$ under the diagonal embedding
\[\fundef{\delta:}{\lie{g}_\CC}{\lie{g}_\CC \oplus \overline{\lie{g}_\CC}}{X}{(X, X)}.\]
(This embedding is thus $\CC$-linear on the first coordinate and $\CC$-antilinear on the second coordinate --- although of course, when talking about $\lie{g}_\RR$ we forget about the complex structure).

Choose some Cartan subspace $\lie{h}_\CC$ of $\lie{g}_\CC$. This then leads to a natural choice of a Cartan subalgebra and subspace for $\lie{g}_\RR$ and $\lie{g}_\RR^\CC$:
\begin{itemize}
\item We choose in $\lie{g}_\RR^\CC \simeq \lie{g}_\CC \oplus \overline{\lie{g}_\CC}$ the Cartan subalgebra $\lie{h} = \lie{h}_\CC \oplus \overline{\lie{h}_\CC}$.
\item We can set the real Cartan subspace $\lie{a}_\RR$ of $\lie{g}_\RR$ to be the $\RR$-linear span of the roots of $\lie{g}_\CC$ over $\lie{h}_\CC$. (This can also be seen as the Cartan subspace of some split real form of $\lie{g}_\CC$).
\item Its complexification $\lie{a}$ is then the image of $\lie{h}_\CC$ by the embedding $\delta: \lie{g}_\CC \to \lie{g}_\CC \oplus \overline{\lie{g}_\CC}$. As for $\lie{h}_{(\RR)}$, it identifies with $\lie{a}_\RR \oplus \lie{a}_\RR \subset \lie{g}_\RR \oplus \lie{g}_\RR \subset \lie{g}_\CC \oplus \overline{\lie{g}_\CC}$.
\end{itemize}

We now recall the classification of the irreducible complex representations of $\lie{g}_\RR$. (Caution: these are not to be confused with the complex representations of $\lie{g}_\CC$! Both correspond to Lie algebra morphisms from $\lie{g}_\CC$ to $\lie{gl}_n(\CC)$ for some $n$, but the latter must be $\CC$-linear whereas the former are only required to be $\RR$-linear, and so are much more numerous.)
\begin{proposition}~
\begin{hypothenum}
\item \cite[Prop.~7.15]{Kna96first} The irreducible complex representations of $\lie{g}_\RR$ are in bijection (via restriction) with the irreducible complex representations of $\lie{g}^\CC_\RR \simeq \lie{g}_\CC \oplus \overline{\lie{g}_\CC}$;
\item {}[classical] The latter are all of the form $V_{\lambda_1}(\lie{g}_\CC) \otimes V_{\lambda_2}(\overline{\lie{g}_\CC})$, where $(\lambda_1, \lambda_2)$ can be any pair of integral dominant weights of $\lie{g}_\CC$.
\end{hypothenum} 
\end{proposition}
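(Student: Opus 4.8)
The plan is to dispatch the two parts independently, each reducing to a standard result. For part~(i), I would observe that a complex representation of the real Lie algebra $\lie{g}_\RR$ is by definition an $\RR$-linear Lie algebra homomorphism $\pi \colon \lie{g}_\RR \to \lie{gl}_\CC(V)$. By the universal property of complexification, $\pi$ extends uniquely to a $\CC$-linear homomorphism $\pi^\CC \colon \lie{g}_\RR^\CC \to \lie{gl}_\CC(V)$, and conversely every complex representation of $\lie{g}_\RR^\CC$ restricts to one of $\lie{g}_\RR$; these two operations are mutually inverse. Since $\pi$ and $\pi^\CC$ have the same image in $\lie{gl}_\CC(V)$, a subspace of $V$ is $\lie{g}_\RR$-invariant if and only if it is $\lie{g}_\RR^\CC$-invariant, so irreducibility is preserved in both directions. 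This is exactly the statement I would cite as \cite[Prop.~7.15]{Kna96first}.

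For part~(ii), I would feed into this the canonical isomorphism $\lie{g}_\RR^\CC \simeq \lie{g}_\CC \oplus \overline{\lie{g}_\CC}$ recalled above, which presents the complexification as a direct sum of two ideals, each a simple complex Lie algebra. The key input is then the classical structural fact that the finite-dimensional irreducible representations of a direct sum $\lie{a} \oplus \lie{b}$ of Lie algebras over $\CC$ are precisely the external tensor products $U \otimes W$, with $U$ (resp.\ $W$) irreducible over $\lie{a}$ (resp.\ over $\lie{b}$); this follows from the identification $U(\lie{a} \oplus \lie{b}) \simeq U(\lie{a}) \otimes U(\lie{b})$ of enveloping algebras together with a Schur-lemma argument. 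Applying this to $\lie{a} = \lie{g}_\CC$ and $\lie{b} = \overline{\lie{g}_\CC}$, and then invoking the highest-weight classification (\cite[Thm.~5.5]{Kna96first}) for each simple factor, yields that every irreducible is of the form $V_{\lambda_1}(\lie{g}_\CC) \otimes V_{\lambda_2}(\overline{\lie{g}_\CC})$.

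The one point that requires care — and which I expect to be the main (if modest) obstacle — is the bookkeeping around the factor $\overline{\lie{g}_\CC}$: I must verify that its dominant integral weights are genuinely parametrized by those of $\lie{g}_\CC$, as the statement asserts. Here I would use that complex conjugation furnishes a $\CC$-antilinear Lie algebra isomorphism $\lie{g}_\CC \to \overline{\lie{g}_\CC}$ fixing the chosen Cartan subalgebra $\lie{h}_\CC$; this transports the root system, weight lattice and dominant chamber of $\lie{g}_\CC$ onto those of $\overline{\lie{g}_\CC}$, so that the irreducible representations of the second summand are indeed indexed by dominant integral weights $\lambda_2$ of $\lie{g}_\CC$. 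With this identification in place, the description in part~(ii) is complete, and the two parts together give the classification.
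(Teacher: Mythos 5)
Your proof is correct, and it matches the paper's treatment: the paper states this proposition without proof, citing Knapp's Prop.~7.15 for part (i) and labelling part (ii) as classical, and your argument is precisely the standard content behind those citations (universal property of complexification for (i); $U(\lie{a}\oplus\lie{b})\simeq U(\lie{a})\otimes U(\lie{b})$ plus Schur and the highest-weight classification, with the conjugation bookkeeping for $\overline{\lie{g}_\CC}$, for (ii)). One small correction: $\pi$ and $\pi^\CC$ do \emph{not} have the same image in general (the image of $\pi^\CC$ is the $\CC$-linear span of the image of $\pi$), but since the invariant subspaces under discussion are complex subspaces of $V$, invariance under $\pi(\lie{g}_\RR)$ is equivalent to invariance under its $\CC$-span, so your conclusion that irreducibility is preserved in both directions stands.
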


Now it is easy to see that with these choices, $\lie{l}(\lie{g}_\RR) = \lie{h}$ and $w_0(\lie{g}_\RR) = (w_0(\lie{g}_\CC), w_0(\lie{g}_\CC))$. It follows that, for every integral dominant weight $(\lambda_1, \lambda_2)$ of $\lie{g}_\RR^\CC$ (where $\lambda_1$ and $\lambda_2$ are integral dominant weights of $\lie{g}_\CC$), we have
\[\restr{w_0}{V^{\lie{l}}_{(\lambda_1, \lambda_2)}(\lie{g}_\RR^\CC)} = \restr{w_0}{V^{\lie{l}}_{\lambda_1}(\lie{g}_\CC)} \otimes \restr{w_0}{V^{\lie{l}}_{\lambda_2}(\overline{\lie{g}_\CC})}.\]

Now for the complex Lie algebra $\lie{g}_\CC$ (with its complex structure), the action of $w_0$ on $V^{\lie{l}}$ is obviously the same as for its split real form.

\begin{remark}
Of course this also applies to the algebra $\lie{so}(1,3) \simeq \lie{sl}_2(\CC)$ studied in Theorem~\ref{algo_for_so_n_1}. The reader can verify that the procedure outlined here provides an alternative proof of Theorem~\ref{algo_for_so_n_1} for $n = 3$.
\end{remark}

\subsection*{A remark about split and quasi-split real forms}

Note that all non absolutely simple Lie algebras are in particular quasi-split. In fact, the previous discussion is a special case of the following more general statement:
\begin{claim}
Let $\lie{g}_{\operatorname{split}}$ and $\lie{g}_{\operatorname{quasi-split}}$ be two real forms of the same complex Lie algebra~$\lie{g}$, which are respectively split and quasi-split. Then (by definition) both of them have the same $\lie{l}$, namely $\lie{l} = \lie{h}$. Furthermore, they also have the same element $w_0$. So the action of $w_0$ on $V^{\lie{l}}$ is in fact the same in the split and quasi-split cases.
\end{claim}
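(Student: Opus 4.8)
The plan is to show that in both cases the operator $w_0$ acts on $V^{\lie{l}}$ as one and the same transformation, determined purely by the complex data $(\lie{g}, \lie{h}, V)$. First note that $\lie{l} = \lie{h}$ forces $V^{\lie{l}}$ to be the zero weight space $V_0 := \setsuch{v \in V}{\lie{h} \cdot v = 0}$: indeed $\lie{h}$ acts on the weight space $V_\mu$ by the scalar $\mu$, so $\lie{h}\cdot v = 0$ holds exactly on $V_0$. This subspace depends only on $(\lie{g}, \lie{h}, V)$, hence is literally the same for both real forms. Moreover the maximal torus $T = \exp \lie{h}$ of a complex group $G$ with Lie algebra $\lie{g}$ acts trivially on $V_0$, so the action of the full Weyl group $W(\Delta) = N_G(T)/T$ descends to a canonical action on $V_0$ (the lift of a Weyl element to $N_G(T)$ is ambiguous only up to $T$, which is trivial on $V_0$). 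This canonical $W(\Delta)$-action is again manifestly form-independent. So it suffices to show that in each case a representative of $w_0$ maps, in $W(\Delta)$, to the same element — namely the longest element $w_0^{\Delta}$ of the full Weyl group.

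In the split case this is immediate: since $\lie{a} = \lie{h}$ the restricted root system equals $\Delta$, the restricted Weyl group is $W(\Delta)$, and $w_0$ is by definition $w_0^{\Delta}$; any representative in $N_{G_{\operatorname{split}}}(\lie{a}_\RR)$ normalizes $\lie{h} = \lie{a}$ and so acts on $V_0$ by the canonical action of $w_0^{\Delta}$.

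For the quasi-split case, write $\Sigma$ for the restricted root system and recall that $\lie{l} = \lie{h}$ is equivalent to $\Delta_0 := \setsuch{\alpha \in \Delta}{\restr{\alpha}{\lie{a}_\RR} = 0} = \emptyset$ (no black nodes in the Satake diagram). I would choose a representative $n$ of $w_0$ inside $N_K(\lie{a}_\RR)$, where $K$ is a maximal compact subgroup with Lie algebra $\lie{k}$. Then $\Ad(n)$ preserves $\lie{a}_\RR$ and hence also $\lie{m} := Z_{\lie{k}}(\lie{a}_\RR)$; as $\lie{h} = (\lie{a}_\RR \oplus \lie{m})^\CC$ (using quasi-splitness, which makes $\lie{m}$ abelian), $\Ad(n)$ normalizes $\lie{h}$. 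Thus $n$ represents some $\bar w \in W(\Delta)$ with $\restr{\bar w}{\lie{a}_\RR} = w_0$, the given element of $W(\Sigma)$. The key step is to upgrade this to $\bar w = w_0^{\Delta}$. Here I would exploit the ordering chosen in \ref{itm:lex_ord_choice}, which ``puts $\lie{a}_\RR$ first'': combined with $\Delta_0 = \emptyset$, it gives $\alpha \in \Delta^+ \iff \restr{\alpha}{\lie{a}_\RR} \in \Sigma^+$ for every root $\alpha$. Since restriction to $\lie{a}_\RR$ commutes with $\bar w$, and $w_0 \in W(\Sigma)$ sends $\Sigma^+$ to $\Sigma^-$, any $\alpha \in \Delta^+$ satisfies $\restr{\bar w\alpha}{\lie{a}_\RR} = w_0(\restr{\alpha}{\lie{a}_\RR}) \in \Sigma^-$, whence $\bar w \alpha \in \Delta^-$. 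Therefore $\bar w(\Delta^+) = \Delta^-$, which characterizes the longest element, so $\bar w = w_0^{\Delta}$.

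Combining the two cases, in each the well-defined action of $w_0$ on $V^{\lie{l}} = V_0$ equals the canonical action of $w_0^{\Delta} \in W(\Delta)$, which is one and the same operator; this proves the claim. The main obstacle is precisely the last identification $\bar w = w_0^{\Delta}$: a priori the lift of $w_0 \in W(\Sigma)$ to $W(\Delta)$ is only determined up to the pointwise stabilizer $W(\Delta_0)$ of $\lie{a}_\RR$, and such an ambiguity could act nontrivially on the zero weight space $V_0$. Quasi-splitness is exactly the hypothesis that kills this ambiguity ($\Delta_0 = \emptyset$), and the ``$\lie{a}_\RR$-first'' ordering is what guarantees the lift is the longest element rather than merely some Weyl element restricting correctly.
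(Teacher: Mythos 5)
Your proof is correct, and it takes a genuinely different route from the paper --- in fact, the paper gives no abstract argument at all: its entire justification of the Claim is the remark that it ``can be checked at least on a case-by-case basis'' (i.e.\ by going through the classification of quasi-split real forms, as in the Appendix tables), followed by the speculation that an abstract argument should exist, with [Heck, Prop.~5.2] suggested as a starting point. You have supplied precisely that missing abstract argument. Its three ingredients are all sound: the canonical action of the full Weyl group $W(\Delta)$ on the zero weight space $V_0 = V^{\lie{h}}$ (well defined because $T = \exp \lie{h}$ acts trivially there); the fact that a representative $n \in N_K(\lie{a}_\RR)$ of the restricted $w_0$ normalizes $\lie{h} = (\lie{a}_\RR \oplus \lie{m})^\CC$ in the quasi-split case and hence induces a lift $\bar w \in W(\Delta)$; and the key observation that the ``$\lie{a}_\RR$-first'' ordering, combined with the absence of roots vanishing on $\lie{a}_\RR$, gives $\alpha \in \Delta^+ \iff \restr{\alpha}{\lie{a}_\RR} \in \Sigma^+$ and therefore forces $\bar w(\Delta^+) \subseteq \Delta^-$, whence $\bar w = w_0^{\Delta}$ by cardinality and the standard characterization of the longest element. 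Quasi-splitness enters exactly where it must: it kills the $W(\Delta_0)$-ambiguity of the lift, as you note. What your approach buys is classification-free uniformity; what the paper's buys is only brevity, the Claim being a side remark there. Two points you should make explicit to turn ``the same action'' into an equality of operators rather than a conjugacy: first, the two real forms must be arranged to share not only the Cartan subalgebra $\lie{h}$ but also the ordering on $\lie{h}_{(\RR)}$ --- any ordering adapted to the quasi-split $\lie{a}_\RR$ is automatically adapted to the split one, whose Cartan subspace is all of $\lie{h}_{(\RR)}$ --- so that $\Delta^+$, the element $w_0^{\Delta}$, and the highest-weight labelling of $V$ are literally the same for both forms; second, your computation with one particular representative $n$ (and with $G_\RR$ realized inside the simply connected complex group, so that $V$ integrates to a group action) computes \emph{the} action of $w_0$ on $V^{\lie{l}}$ only by virtue of the well-definedness and group-independence of that action, which is the result of [Smi20, \S~2.1] quoted in the introduction and should be invoked at that step.
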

This can be checked at least on a case-by-case basis. (By thinking a bit harder, it may also possible to find an abstract argument for this. \cite[Prop.~5.2]{Heck} seems to be a good starting point.)

\bibliographystyle{alpha}
\bibliography{/home/ilia/Documents/Travaux_mathematiques/mybibliography}

\noindent I. Smilga, Mathematical Institute, University of Oxford, United Kingdom 

\noindent E-mail: \url{ilia.smilga@normalesup.org}
\end{document}